\DeclareRobustCommand\genfrac[6]{%
  {%
    \if 0#4\relax\displaystyle\else
    \if 1#4\relax\textstyle\else
    \if 2#4\relax\scriptstyle\else
    \if 3#4\relax\scriptscriptstyle\else
      #4%
    \fi\fi\fi\fi
    \fracstyle
    {\begingroup #5\endgroup
      \csname @@\ifx\maxdimen#3\maxdimen over\else above\fi
        \if @#1@\else withdelims\fi\endcsname #1 #2 #3\relax
     #6}%
  }%
}
\renewcommand{\binom}{\genfrac(){0pt}{}}
\Crefname{equation}{}{}
\title[Nondefective secant varieties of the Chow variety]{All secant varieties of the Chow variety are nondefective for cubics and quaternary forms}
\date{}
\author{Douglas A. Torrance}
\email{dtorrance@piedmont.edu}
\address{Piedmont College, Georgia, United States of America.}
\author{Nick Vannieuwenhoven}
\email{nick.vannieuwenhoven@kuleuven.be}
\address{KU Leuven, Department of Computer Science, Leuven, Belgium.}
\thanks{NV was supported by a Postdoctoral Fellowship of the Research Foundation---Flanders (FWO) with project 12E8119N}
\subjclass[2010]{14C20, 14N05, 14Q15, 14Q20, 15A69, 15A72}
\newcommand{\GG}{\mathbb G}
\newcommand{\kk}{\Bbbk}
\newcommand{\NN}{\mathbb N}
\newcommand{\PP}{\mathbb P}
\newcommand{\ZZ}{\mathbb Z}
\newcommand{\CC}{\mathbb C}
\DeclareMathAlphabet{\pzc}{OT1}{pzc}{m}{it}
\newcommand{\statement}[1]{\pzc{{#1}}}
\newcommand{\mx}[1]{\mathrm{\mathbf{#1}}}
\DeclareMathOperator{\CV}{CV}
\DeclareMathOperator{\LC}{LC}
\DeclareMathOperator{\expdim}{expdim}
\DeclareMathOperator{\Seg}{Seg}
\DeclareMathOperator{\Split}{Split}
\newtheorem{theorem}{Theorem}[section]
\newtheorem{proposition}[theorem]{Proposition}
\newtheorem{lemma}[theorem]{Lemma}
\newtheorem{corollary}[theorem]{Corollary}
\newtheorem{conjecture}[theorem]{Conjecture}
\theoremstyle{definition}
\newtheorem{definition}[theorem]{Definition}
\newtheorem{example}[theorem]{Example}
\numberwithin{equation}{section}
\begin{document}

\begin{abstract}
  The Chow rank of a form is the length of its smallest decomposition
  into a sum of products of linear forms.
  For a generic form, this corresponds to finding the smallest secant
  variety of the Chow variety which fills the ambient space.
  We determine the Chow rank of generic cubics and quaternary forms by proving nondefectivity of all involved secant varieties.
  The main new ingredient in our proof is the generalization of a technique by [Brambilla and Ottaviani, On the Alexander--Hirschowitz theorem, J. Pure Appl. Algebra, 2008] that consists of employing Terracini's lemma and Newton's backward difference formula to compute the dimensions of secant varieties of arbitrary projective varieties. Via this inductive construction, the proof of nondefectivity ultimately reduces to proving a number of base cases. These are settled via a computer-assisted proof because of the large dimensions of the spaces involved. The largest base case required in our proof consisted of computing the dimension of a vector space constructed from the $400$th secant variety of a degree-$82$ Chow variety embedded in $\mathbb{P}^{98769}$.
\end{abstract}

\maketitle

\section{Introduction}

A famous question in number theory is \textit{Waring's problem}:
Given any $d\in\NN$, find the smallest $s$ such that for
all $n\in\NN$, there exist $n_1,\ldots,n_s\in\NN$ such that
\begin{equation*}
  n = n_1^d + \cdots + n_s^d.
\end{equation*}
The fact that such an $s$ exists for each $d$ was stated in \cite{waring} by Waring
himself without proof in 1770 and was finally proven by
Hilbert 139 years later \cite{hilbert}.
According to \cite{BO}, still about two decades before Hilbert's definite answer, a substantial generalization of Waring's problem was studied in 1891 by Campbell \cite{Campbell1891}. He studied the question: given any $n,d\in\NN$, what is the smallest $s\in\NN$ so that a homogeneous polynomial of degree $d$ in $n+1$ variables $f$ can be expressed as
\begin{equation}\label{eqn_waring_decomposition}
  f = \ell_1^d + \cdots + \ell_s^d,
\end{equation}
for some linear forms $\ell_1,\ldots,\ell_s$? This $s$ is called the \textit{Waring rank} of $f$. The Waring rank of \textit{generic} polynomials $f$ was ultimately determined a century later in 1995 by Alexander and Hirschowitz \cite{AH}. For brevity, we use the standard terminology ``generic'' to mean ``outside of a closed set in the Zariski topology'' in this paper.

In recent years, tensors and their decompositions have witnessed a tremendous increase in popularity in applied mathematics, chemometrics, psychometrics, signal processing, and machine learning \cite{Kolda2009}. The expression \cref{eqn_waring_decomposition} is analoguous to a standard tensor decomposition. Recall that homogeneous polynomials of degree $d$ in $n+1$ variables form a vector space that corresponds to the subspace of symmetric tensors $S^d \CC^{n+1} \subset \CC^{n+1} \otimes \cdots \otimes \CC^{n+1}$. After choosing coordinates on $\CC^{n+1}$ and taking the $d$-fold tensor product, a symmetric tensor $f \in S^d \CC^{n+1}$ can be represented by a $d$-array $F$ with a full symmetry, by which we mean that $F_{i_1, \ldots, i_d} = F_{\sigma(i_1), \ldots, \sigma(i_d)}$ for every permutation $\sigma$ on $d$ elements. 
In this terminology, the Waring rank of $f$ in \cref{eqn_waring_decomposition} corresponds to the smallest $s$ such that 
\[
 F = \sum_{i=1}^s {L}_i \otimes \cdots \otimes {L}_i,
\]
where $L_i \in \CC^{n+1}$ and $\otimes$ is the tensor product. See \cite{CGLM2008} for details on this viewpoint.

This paper studies a generalization of foregoing \emph{Waring decomposition} from \cref{eqn_waring_decomposition}.
In this case, we have a partition of $d$, i.e., $\vb d = (d_1,\ldots, d_k)$ with $||\vb d||_1=d$, where $||\vb d||_1=d_1+\cdots+d_k$ is the usual $L^1$-norm, and we seek to express a degree-$d$ form $f$ in $n+1$ variables as a $\vb d$-Chow--Waring decomposition \cite{AV,CCGO} with a minimal number of terms $s$. That is,
\begin{equation}\label{chow-waring decomposition}
  f = \ell_{1,1}^{d_1}\cdots\ell_{1,k}^{d_k} + \cdots +
  \ell_{s,1}^{d_1}\cdots\ell_{s,k}^{d_k}.
\end{equation}
where $\ell_{1,1},\ldots,\ell_{1,k},\ldots,\ell_{s,1},\ldots,\ell_{s,k}$ are linear forms and $s$ is minimal.
If $\vb d = (d)$, then $s$ is exactly the Waring rank of $f$. On the
other extreme, if $\vb d = (1,\ldots,1)$, then this value of $s$ has become known
as the \textit{Chow rank} of $f$. For arbitrary $\vb d$, the above minimal $s$ is the $\vb d$th \textit{Chow--Waring rank} of $f$. The novel contribution of this work concerns the question: given a generic $f \in S^d \CC^{n+1}$, what is its Chow rank?

These types of questions are naturally studied using algebraic geometry; see \cite{BCCGO} for an excellent overview.
In this light, the central problem of this paper can be interpreted as finding the smallest $s$ such that the $s$th \textit{secant variety} of the \textit{Chow variety} (or \textit{split variety}) $\mathcal{C}_{d,n}$ of completely decomposable forms fills the ambient $\PP^{\binom{n+d}{d}-1}$. In general, the $\vb d$th Chow--Waring rank of a generic form is the smallest $s$ for which $\sigma_s(\CV_{\vb d}(\PP^n))$, the $s$th secant variety of the \textit{Chow--Veronese variety} $\CV_{\vb d}(\PP^n)$ of polynomials of the form $\ell_1^{d_1}\cdots\ell_k^{d_k}$, fills the ambient space. Note that $\CV_{(d)}(\PP^n)=v_d(\PP^n)$, the $d$th Veronese embedding of $\PP^n$, and $\CV_{(1,\ldots,1)}(\PP^n)=\mathcal{C}_{d,n}$.
 

Based on a na\"ive parameter count, it is expected that secant
varieties of the $\vb d$th Chow--Veronese variety have dimension
\begin{equation*}
  \expdim\sigma_s(\CV_{\vb d}(\PP^n))=\min\left\{s(kn+1),
    \binom{n+d}{d}\right\}-1,
\end{equation*}
and, consequently, the corresponding $\vb d$th Chow--Waring rank of a generic form $f$
would be $r_{exp} = \left\lceil(kn+1)^{-1}\binom{n+d}{d}\right\rceil$, which is the smallest
$s$ so that $s(kn+1)\geq\binom{n+d}{d}$.
However, this is not always the case.
A number of \textit{defective} examples exist in which $\dim\sigma_s(\CV_{\vb
d}(\PP^n))<\expdim\sigma_s(\CV_{\vb d}(\PP^n))$. All the known defective cases are listed in \cref{defective cases}. Aside from the case of quadrics, i.e., $||\vb d||_1=2$, only a finite number of secant varieties appears to be defective. As a defective $s_0$th secant variety with $s_0 < r_{exp}$ necessarily implies defectivity for all $s_0 \le s < r_{exp}$, it should be easy to find defective cases. Therefore, it is reasonable to make the following conjecture.

\begin{table}[tb]
\centering
\caption{Known defective cases}
\label{defective cases}
\begin{tabular}{ccccc}
  \toprule
  $\vb d$ & $n$ & $s$ & $\dim\sigma_s(\CV_{\vb d}(\PP^n))$ & Reference \\
  \midrule
  (2) & $\geq 2$ & $2,\ldots,n$ &
    $\binom{n + 2}{2} - \binom{n - s + 2}{2} - 1$ & \cite{BO} \\
  (1,1) & $\geq 4$ & $2,\ldots,\left\lfloor\frac{n}{2}\right\rfloor$ &
    $\binom{n + 2}{2} - \binom{n - 2s + 2}{2} - 1$ & \cite{AB,CGG,torrance}\\
  (3) & 4 & 7 & 33 & \cite{BO}\\
  (2, 1) & 2 & 2 & 8 & \cite{CCGO,CGG} \\
  (2, 1) & 3 & 3 & 18 & \cite{CGG} \\
  (2, 1) & 4 & 4 & 33 & \cite{CGG} \\
  (4) & 2 & 5 & 13 & \cite{BO}\\
  (4) & 3 & 9 & 33 & \cite{BO}\\
  (4) & 4 & 14 & 68 & \cite{BO}\\
  \bottomrule
\end{tabular}
\end{table}

\begin{conjecture}\label{conjecture}
  With the exception of the known defective cases in \cref{defective cases}, we have $\dim\sigma_s(\CV_{\vb
    d}(\PP^n))=\expdim\sigma_s(\CV_{\vb d}(\PP^n))$ for all $\vb d$, $n$,
  and $s$.
\end{conjecture}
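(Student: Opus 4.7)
The plan is to attack \cref{conjecture} by extending the inductive machinery outlined in the abstract to arbitrary partitions $\vb d$. By Terracini's lemma, $\dim \sigma_s(\CV_{\vb d}(\PP^n))$ coincides with $\dim(T_1+\cdots+T_s)-1$, where $T_i$ is the affine tangent space to $\CV_{\vb d}(\PP^n)$ at a generic point $\ell_{i,1}^{d_1}\cdots\ell_{i,k}^{d_k}$; concretely $T_i$ is the sum, over $j=1,\ldots,k$, of the subspaces $\ell_{i,1}^{d_1}\cdots \ell_{i,j}^{d_j-1}\cdots\ell_{i,k}^{d_k}\cdot V$, where $V\subset S^1\CC^{n+1}$ is the full space of linear forms. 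Nondefectivity therefore becomes the purely linear-algebraic statement that these explicit subspaces of $S^d\CC^{n+1}$ span a space of the expected dimension.

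The next step is to import the Brambilla--Ottaviani induction based on Newton's backward difference formula, which the paper already generalizes to arbitrary projective varieties. Applied to $\CV_{\vb d}(\PP^n)$, a carefully chosen specialization of some of the $\ell_{i,j}$ onto a fixed hyperplane $H\cong\PP^{n-1}$ splits the generic tangent-space sum into a part living in $S^d H$ and a residual part, and the difference formula then expresses $\dim\sigma_s(\CV_{\vb d}(\PP^n))$ as a combination of dimensions of secant varieties of $\CV_{\vb d}(\PP^{n-1})$ and of Chow--Veronese varieties with strictly smaller partitions $\vb d'$. Iterating this reduction, every instance $(\vb d,n,s)$ outside \cref{defective cases} should be reducible, in finitely many steps, to one of a finite list of base cases for each $\vb d$; each base case is then discharged by a direct rank computation of the relevant multiplication map, performed by computer algebra---if necessary over a finite field, appealing to semicontinuity of rank to lift the conclusion to characteristic zero---exactly as is done in the paper for cubics and quaternary forms.

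The main obstacle, in my view, lies in organising the induction so that it terminates on a provably finite and explicit set of base cases for \emph{every} partition $\vb d$. For cubics or for $n\leq 3$ the recursion naturally bottoms out because either $k=1$ or $n$ has little room to shrink, but for a general $\vb d=(d_1,\ldots,d_k)$ with both $k$ and the $d_i$ large one must simultaneously induct on $n$, on $s$, and on the parts of $\vb d$; each such direction may generate new candidate defective cases that have to be excluded by hand to remain consistent with \cref{defective cases}. Matching the induction exactly to the known defective list, together with the combinatorial explosion and the sheer size of the base-case computations as $d$ grows---already one of the paper's base cases involves the $400$th secant variety inside $\PP^{98769}$---seems to be the principal reason the full statement remains a conjecture while only the cubic and quaternary slices are settled unconditionally.
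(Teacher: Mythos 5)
The statement you were asked about is Conjecture~\ref{conjecture}, and the paper does \emph{not} prove it. What the paper proves is Theorem~\ref{main_result}: the cubic ($\vb d=(1,1,1)$, all $n$) and quaternary ($n=3$, $\vb d=(1,\ldots,1)$) slices of the conjecture. Your write-up correctly recognizes this situation. You sketch the Terracini-plus-Brambilla--Ottaviani strategy and then concede that, for arbitrary $\vb d$, the recursion is not known to terminate on a provably finite, explicitly checkable set of base cases; that is exactly why the general statement remains open, and no honest ``proof'' exists to compare against. Your account of the ingredients does track the machinery the paper uses where it does prove something: the generic tangent space $T_p\widehat{\CV_{\vb d}(\PP^n)}=\sum_{j}\ell_1^{d_1}\cdots\ell_j^{d_j-1}\cdots\ell_k^{d_k}\,V$ is the natural extension of \cref{eqn_product_rule_tangent_space}, and the inclusion--exclusion/backward-difference reduction is the content of \cref{BO lattices}.

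One technical clarification: the paper does not specialize onto a single hyperplane $H\cong\PP^{n-1}$. Instead it places points on a family of $K(t)$ linear subspaces $P_1(t),\ldots,P_{K(t)}(t)$, each of codimension $N(t)-N(t-\ell)$ in $\PP^{N(t)-1}$, together with a further $P'(t)$ when $t>t_0$, constrained so that the resulting configuration is an $(N,\ell)$-lattice in the sense of \cref{def_bo_lattice}. The quasiperiod $\ell$ is chosen large enough that $\lceil N(t)/m(t)\rceil$ is $\ell$-quasipolynomial, which is what makes the subabundant/superabundant dichotomy in \cref{cor_nondefective} effective. Your closing paragraph then names the real obstacles accurately: for general $\vb d$ one must exhibit such a lattice compatible with $\CV_{\vb d}(\PP^n)$ while reproducing precisely the exceptional list in \cref{defective cases}, and the base-case computations explode in size---the paper itself estimates that already the quartic Chow case may require $\ell\approx 1536$ and a space of dimension on the order of $6\times 10^{13}$. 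In short, there is no gap in your reasoning that the paper's proof would fill, because the paper offers no proof of Conjecture~\ref{conjecture}; your proposal is a fair description of how the proven slices are handled and of why the full conjecture is currently out of reach.
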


Substantial progress has been made towards proving this conjecture.
Most famously, the Veronese case, $\vb d = (d)$, was completed by
Alexander and Hirschowitz in \cite{AH}; see \cite{BO} for an excellent overview of this case.
More recently, in \cite{AV}, Abo and the second author completed the
$\vb d = (d - 1, 1)$ case, for which the Chow--Veronese variety is the
tangential variety to a Veronese variety, building on foundational work by
Bernardi, Catalisano, Gimigliano, and Id\'a \cite{BCGI}.
Catalisano, Chiantini, Geramita, and Oneto also proved several results
for general $\vb d$ in \cite{CCGO}.

The remaining progress has been made in the Chow case with
$\vb d =(1,\ldots,1)$.
Arrondo and Bernardi were among the first to look at this case in \cite{AB}.
Shin then found a connection with Hilbert functions of unions of linear
star-configurations and was able to use this to complete the $n=2$
case for $d\leq 5$ \cite{shin1}.
Abo \cite{abo} then adapted a technique of Brambilla and Ottaviani \cite{BO} to
complete the $n=2$ case and make significant progress towards the
$n=3$ and $d=3$ cases.
The first author improved on this slightly in \cite{thesis}, and also
found in \cite{torrance} that \cref{conjecture} is true for the Chow case
provided that $s\leq 35$.
In \cite{CGGHMNS}, Catalisano et al.~examined secant varieties of
varieties of reducible hypersurfaces, i.e., products of forms of
arbitrary degree as opposed to only linear forms.
The overlap between their problem and the Chow--Waring problem is the
Chow variety case, and their results show that
$\sigma_s(\mathcal{C}_{d,n})$ has the expected dimension provided that
$s\geq\binom{n+d-1}{n}$.

In conclusion, \cref{nondefective cases} summarizes all the cases known to us for which \cref{conjecture} holds (excluding the exceptions in \cref{defective cases}).

\begin{table}[tb]
  \centering
  \caption{Known cases where Conjecture \ref{conjecture} holds.
  More details on the definitions of $s_1$, $s_2$, $s_1'$, and $s_2'$ can be found in \cref{old n=3 case} and \cref{old d=3 case}.}
  \label{nondefective cases}
  \begin{tabular}{cccc}
    \toprule
    $\vb d$ & $n$ & $s$ & Reference \\
    \midrule
    any & 1 & any & \cite{CCGO} \\
    any & any & $\leq\max\left\{2,\left\lfloor\frac{d+1}{k+1}\right\rfloor\left\lfloor\frac{n}{2}\right\rfloor,2\left\lfloor\frac{n}{3}\right\rfloor\right\}$
                        & \cite{CCGO} \\
    $(d)$ & any & any & \cite{AH,BO} \\
    $(d-1,1)$ & any & any & \cite{AV,BCGI} \\
    $(1,1,1)$ & any & $\leq s_1(n)$ or $\geq s_2(n)$ &
                                                                                  \cite{abo} \\
    $(1,\ldots,1)$ & 2 & any & \cite{abo,shin1} \\
    $(1,\ldots,1)$ & 3 & $\leq s_1'(d)$ or $\geq\min\{s_2(d),s_2'(d)\}$
                         & \cite{abo,thesis} \\
    $(1,\ldots,1)$ & any & $\leq\max\{35, s_1'(d)\}$ or
                           $\geq \binom{n+d-1}{n}$ &
                                                     \cite{CGGHMNS,torrance} \\
    \bottomrule
  \end{tabular}
\end{table}


In addition to the problem of computing the Chow rank of a generic
form, secant varieties of Chow varieties have applications to
complexity theory \cite{landsberg} and have connections to secant
varieties of Grassmannians \cite{AB}, unions of linear
star-configurations \cite{shin2,shin1}, and complete intersections on
hypersurfaces \cite{CCG1,CCG2}.

\subsection{Main contribution}
The novel contribution of this paper concerns resolving the Chow case of \cref{conjecture} when either $d=3$ or $n=3$.
In particular, we complete Abo's partial results from \cite{abo} and prove that these Chow varieties are never defective. These new results were established using a novel general technique that we call \textit{Brambilla--Ottaviani lattices}; they are introduced in \cref{BO lattices} below. The main result is this:

\begin{theorem}\label{main_result}
  All secant varieties to Chow varieties of cubics and quaternary
  forms have the expected dimension. That is, \cref{conjecture}
  is true for all $n$ if $\vb d=(1,1,1)$, and it is true for all $\vb d =
  (1,\ldots,1)$ if $n=3$.
\end{theorem}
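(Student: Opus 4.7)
The plan is to reduce the dimension computation of each secant variety $\sigma_s(\CV_{\vb d}(\PP^n))$ to an inductive scheme whose base cases can be verified by computer algebra. By Terracini's lemma, it suffices to show that at $s$ generic points $[p_1],\ldots,[p_s]$ of $\CV_{\vb d}(\PP^n)$ the span of the affine tangent spaces $T_{p_i}\widehat{\CV}_{\vb d}(\PP^n)$ attains the expected dimension $\min\{s(kn+1),\binom{n+d}{d}\}$. Since upper-semicontinuity means that a single nondefective specialization already proves nondefectivity at generic points, the strategy is to pick a favorable specialization and proceed by induction on $n$, $d$, and $s$.

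First I would formalize the generalization of Brambilla--Ottaviani's specialization argument promised in the abstract. The idea is to place a carefully chosen subset of the $s$ generic points on a hyperplane (or more generally on a linear subspace), which by Newton's backward difference formula breaks the dimension count into a \emph{residual} problem on the restricted linear system and a \emph{trace} problem on the hyperplane itself. The Chow--Waring setting is favorable because a decomposable point $\ell_1^{d_1}\cdots\ell_k^{d_k}$ has a tangent space with an explicit component-wise structure, so the trace and residual pieces remain Chow--Veronese-like objects of strictly smaller $n$, $d$, or $s$. Iterating the peeling yields a finite \emph{Brambilla--Ottaviani lattice} of subproblems, and a successful run terminates at an explicit finite list of base cases.

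Next I would enumerate precisely which base cases survive the recursion in the two regimes $\vb d=(1,1,1)$ for arbitrary $n$ and $\vb d=(1,\ldots,1)$ with $n=3$ for arbitrary $d$. Combining the lattice with the ranges already settled in \cref{nondefective cases}, in particular Abo's partial results \cite{abo}, the bound $s\le 35$ from \cite{torrance}, and the asymptotic range $s\ge\binom{n+d-1}{n}$ from \cite{CGGHMNS}, reduces the outstanding work to a finite list of triples $(n,d,s)$ in each regime. For each such triple I would certify nondefectivity by fixing an explicit $s$-tuple of decomposable points, assembling the sparse Jacobian matrix whose rows span the sum of affine tangent spaces, and computing its rank modulo a well-chosen prime; since rank can only drop under specialization and reduction modulo $p$, a single successful rank check implies the characteristic-zero statement.

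The hardest part is computational rather than conceptual: as flagged in the abstract, one surviving base case concerns $\sigma_{400}$ of a Chow variety of degree $82$ in $\PP^{98769}$, so the Jacobian has on the order of $10^5$ columns and tens of thousands of rows. Taming this obstacle requires two ingredients working in concert: exploiting the explicit monomial description of tangent vectors at decomposable points, so that the matrix is sparse and can be built entry-by-entry without symbolic parametrization, and running the rank certification over a finite field of sufficient size, so that the computation remains feasible and deterministic. If these base cases can be resolved, the Brambilla--Ottaviani lattice propagates nondefectivity through the remaining $(n,d,s)$ values and closes both cases of \cref{main_result}.
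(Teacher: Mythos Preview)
Your outline captures the broad shape of the argument—Terracini plus semicontinuity plus a Brambilla--Ottaviani style induction with computer-verified base cases—but it misses the one arithmetic ingredient on which the whole proof turns, and the substitute you offer does not work.

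The problem is your claim that combining the lattice with the ranges already in \cref{nondefective cases} ``reduces the outstanding work to a finite list of triples $(n,d,s)$.'' It does not. For $n=3$ (and symmetrically for $d=3$), Abo's bounds $s_1(d),s_2(d)$ leave a gap whose width grows with $d$ (see \cref{old n=3 case}); the bound $s\le 35$ from \cite{torrance} covers only small $d$; and the bound $s\ge\binom{d+2}{3}$ from \cite{CGGHMNS} is cubic in $d$, far above the generic rank $\sim d^2/18$, so it is useless here. Infinitely many triples remain open after invoking every prior result, and no amount of direct rank certification on individual $(n,d,s)$ will exhaust them.

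What actually makes the induction terminate in the paper is a specific arithmetic choice you do not mention: one must pick the step size $\ell$ so that $s_2(t)=\lceil\binom{t+3}{3}/(3t+1)\rceil$ is exactly $\ell$-quasiquadratic, and this forces $\ell=27$. With that choice, \cref{cor_nondefective} applies with $K_0=3$ and $t_0=82$ and reduces \emph{all} $s$ and \emph{all} $t$ to the finitely many structured statements $\statement{A}_{K(t),s_b}(t)$ for $t\le 82$, $b=1,2$—with no appeal to prior partial results. Abo's earlier attempts with $\ell=6$ and the first author's with $\ell=9$ stalled precisely because $\lceil N(t)/m(t)\rceil$ is not $6$- or $9$-quasipolynomial, so only approximating functions could be used and the gap persisted. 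Note also that the base cases are not bare nondefectivity statements for triples $(n,d,s)$: for $K(t)\ge 1$ they assert that the more elaborate spaces $A_{K(t)}(t)$ of \cref{eqn_original_A}, built from tangent spaces \emph{and} the auxiliary subspaces $\widehat P_j(t)$, have the expected dimension. Finally, the paper runs two separate one-parameter inductions—on $d$ with $n=3$ fixed (\cref{sec_ind_degree_chow}) and on $n$ with $d=3$ fixed (\cref{sec_ind_dimension_chow})—rather than a joint induction on $n$, $d$, and $s$.
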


\begin{corollary}
  The Chow rank of a generic $(n+1)$-ary cubic is
  $\left\lceil\binom{n+3}{3}/(3n+1)\right\rceil$ and the Chow rank of a
  generic quarternary $d$-ic is
  $\left\lceil\binom{d+3}{d}/(3d+1)\right\rceil$.
\end{corollary}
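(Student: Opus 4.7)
The plan is to derive the corollary as a direct numerical consequence of \cref{main_result}. By definition, the Chow rank of a generic form $f \in S^d\CC^{n+1}$ is the smallest $s$ for which $\sigma_s(\mathcal{C}_{d,n})$ equals the ambient projective space $\PP^{\binom{n+d}{d}-1}$. Since $\mathcal{C}_{d,n} = \CV_{(1,\ldots,1)}(\PP^n)$ has $k = d$ parts in its partition, the expected-dimension formula recalled in the introduction specializes to $\expdim \sigma_s(\mathcal{C}_{d,n}) = \min\{s(dn+1), \binom{n+d}{d}\} - 1$. The smallest $s$ making $s(dn+1) \geq \binom{n+d}{d}$ is therefore $\lceil \binom{n+d}{d}/(dn+1) \rceil$, which is the \emph{expected} Chow rank.

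The remaining step is to replace ``expected'' by ``actual'' using \cref{main_result}. For $\vb d = (1,1,1)$ the theorem guarantees nondefectivity of every $\sigma_s(\mathcal{C}_{3,n})$, and for $\vb d = (1,\ldots,1)$ with $n = 3$ it does the same. Combined with the standard monotonicity $\sigma_s(\mathcal{C}_{d,n}) \subseteq \sigma_{s+1}(\mathcal{C}_{d,n})$, the dimensions of the secant varieties therefore strictly increase with $s$ under nondefectivity until the ambient space is filled at precisely the expected step. Specializing $d = 3$ yields the formula $\lceil \binom{n+3}{3}/(3n+1) \rceil$ for the Chow rank of a generic $(n+1)$-ary cubic, and specializing $n = 3$ yields $\lceil \binom{d+3}{d}/(3d+1) \rceil$ for the Chow rank of a generic quaternary $d$-ic.

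No substantive obstacle arises here: all of the hard geometric content has been absorbed into \cref{main_result}, and what remains is a short book-keeping argument that reads off the first value of $s$ at which the expected dimension saturates $\binom{n+d}{d} - 1$. The only thing worth flagging is the need to observe that the $\min$ in the expected-dimension formula is attained by the first argument precisely up to, and by the second argument precisely from, $s = \lceil \binom{n+d}{d}/(dn+1) \rceil$ onwards, so that this value of $s$ is both necessary and sufficient.
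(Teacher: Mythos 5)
Your argument is correct and coincides with what the paper leaves implicit: the corollary is read off directly from \cref{main_result} together with the standard fact that the generic rank equals the smallest $s$ for which $\sigma_s(\mathcal{C}_{d,n})$ fills $\PP^{\binom{n+d}{d}-1}$, which under nondefectivity is the smallest $s$ with $s(dn+1)\geq\binom{n+d}{d}$, i.e.\ $\lceil\binom{n+d}{d}/(dn+1)\rceil$. Specializing $d=3$ and $n=3$ respectively gives both formulas, exactly as you state.
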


In \cite[Theorem 1.3]{torrance}, it is established that if
$s(3d+1)\leq\binom{d+3}{d}$ and $\sigma_s(\mathcal{C}_{d,3})$ is
nondefective, then $\sigma_s(\mathcal{C}_{d,n})$ is nondefective for all
$n\geq 3$.
Therefore we have an improved upper bound on $s$ for all $n$. 

\begin{corollary}
  If $s\leq\frac{1}{3d+1}\binom{d+3}{3}$, then all $s$th secant
varieties to Chow varieties of $d$-ics have the expected dimension.
\end{corollary}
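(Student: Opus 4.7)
The plan is to chain \cref{main_result} together with \cite[Theorem 1.3]{torrance}, supplemented by the previously tabulated results in \cref{nondefective cases}. The claim is that $\sigma_s(\mathcal{C}_{d,n})$ is nondefective for every $n$ whenever $s(3d+1) \le \binom{d+3}{3}$, so I would split the argument according to the value of $n$.

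For the small cases $n \le 2$ no extra work is needed: for $n = 1$ every secant variety of every Chow--Veronese variety is nondefective by \cite{CCGO}, and for $n = 2$ the Chow case was fully settled by Abo and Shin \cite{abo,shin1}. In particular $\sigma_s(\mathcal{C}_{d,n})$ is nondefective for every $s$, so the numerical hypothesis of the corollary is automatically enough.

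For $n \ge 3$, the starting point is \cref{main_result}, which asserts that $\sigma_s(\mathcal{C}_{d,3})$ is nondefective for every $d$ and every $s$. This is the crucial new input. To lift to larger $n$, I would invoke \cite[Theorem 1.3]{torrance}: if $s(3d+1) \le \binom{d+3}{d}$ and $\sigma_s(\mathcal{C}_{d,3})$ is nondefective, then $\sigma_s(\mathcal{C}_{d,n})$ is nondefective for all $n \ge 3$. The numerical hypothesis of the corollary is exactly this inequality (after rewriting $\binom{d+3}{d} = \binom{d+3}{3}$), and the nondefectivity hypothesis is supplied by \cref{main_result}; so the conclusion for $n \ge 3$ follows immediately.

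There is essentially no substantive obstacle at this stage, since \cref{main_result} has already absorbed all the difficulty. The corollary amounts to a short bookkeeping combination of the main theorem with the existing induction-on-$n$ mechanism of \cite{torrance} and the known nondefectivity for $n \le 2$; the only thing to double-check is that the numerical hypothesis $s \le \frac{1}{3d+1}\binom{d+3}{3}$ is written in the right form to feed directly into \cite[Theorem 1.3]{torrance}.
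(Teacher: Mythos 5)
Your proposal is correct and matches the paper's own argument: the paper derives this corollary directly from the preceding paragraph's invocation of \cite[Theorem 1.3]{torrance} combined with \cref{main_result} (which gives nondefectivity of $\sigma_s(\mathcal{C}_{d,3})$ for all $d,s$). Your additional explicit handling of the $n\le 2$ cases via \cite{CCGO,abo,shin1} is a minor bit of extra bookkeeping that the paper leaves implicit, but it does not change the substance of the argument.
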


Finally, since the cases $\vb d = (2), (1,1), (3)$, and $(2,1)$ were already established in the literature, our result concludes the classification of defective Chow--Veronese varieties for $\|\vb d\|_1 \le 3$, i.e., quadrics and cubics.

\begin{corollary}
Conjecture \ref{conjecture} is true for $\|\vb d\|_1 \le 3$. In particular, secant varieties to Chow--Veronese varieties of cubics have the expected dimension except for the known defective cases.
\end{corollary}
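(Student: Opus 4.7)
The plan is to proceed by exhaustive enumeration of the partitions $\vb d$ with $\|\vb d\|_1 \le 3$, since this set is small, and to appeal to an existing result in each case. Concretely, the relevant partitions are
\[
    \vb d \in \{ (1),\ (2),\ (1,1),\ (3),\ (2,1),\ (1,1,1) \},
\]
and for each one Conjecture \ref{conjecture} has either already been established in the literature or follows from Theorem \ref{main_result}.

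For the trivial partition $\vb d = (1)$, the Chow--Veronese variety is all of $\PP^n$, so no secant variety is defective. For $\vb d = (2)$ and $\vb d = (3)$, the Chow--Veronese variety coincides with the Veronese embedding $v_d(\PP^n)$, and nondefectivity outside the known exceptional cases in \cref{defective cases} follows from the Alexander--Hirschowitz theorem \cite{AH,BO}. For $\vb d = (1,1)$, the Chow variety $\mathcal{C}_{2,n}$ parametrizes products of two linear forms; the dimensions of its secant varieties, together with the listed defectivities, are computed in \cite{AB,CGG,torrance}. For $\vb d = (2,1)$, the variety $\CV_{(2,1)}(\PP^n)$ is the tangential variety of the quadratic Veronese $v_2(\PP^n)$, and its secant varieties were classified, with exactly the defectivities listed in \cref{defective cases}, by Abo and the second author in \cite{AV}, building on \cite{BCGI}; equivalently, this is the case $\vb d = (d-1,1)$ with $d=3$.

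The only remaining partition is $\vb d = (1,1,1)$, which corresponds to the Chow variety $\mathcal{C}_{3,n}$ of completely decomposable cubics. Nondefectivity of all its secant varieties, for every $n$, is precisely the first half of Theorem \ref{main_result}. Combining these six cases yields \cref{conjecture} for $\|\vb d\|_1 \le 3$. The ``in particular'' clause is then the restriction to degree-three forms, that is, to the subset $\vb d \in \{(3),(2,1),(1,1,1)\}$, where the known defective cases are exactly those with $\|\vb d\|_1 = 3$ listed in \cref{defective cases}.

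There is no real obstacle here: the corollary is a bookkeeping consequence of Theorem \ref{main_result} once the literature is surveyed. The only care required is to verify that the list of known defective examples in \cref{defective cases} is exhaustive within each of the cases $(2)$, $(1,1)$, $(3)$, and $(2,1)$, which is already ensured by the cited references; after that, Theorem \ref{main_result} supplies the last missing partition $(1,1,1)$ and the enumeration closes.
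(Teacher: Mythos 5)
Your enumeration matches the paper's implicit reasoning exactly: $\vb d = (1)$ is trivial, $(2)$ and $(3)$ are the Alexander--Hirschowitz cases, $(1,1)$ and $(2,1)$ are known from \cite{AB,CGG,torrance} and \cite{AV,BCGI} respectively, and $(1,1,1)$ is supplied by \cref{main_result}. One small slip: $\CV_{(2,1)}(\PP^n)$ is the tangential variety of the \emph{cubic} Veronese $v_3(\PP^n)$, not the quadratic one (the tangential variety of $v_2(\PP^n)$ would be the Chow variety $\mathcal{C}_{2,n} = \CV_{(1,1)}(\PP^n)$); the citations and the identification $\vb d = (d-1,1)$ with $d=3$ are nonetheless correct, so the argument is unaffected.
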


\subsection{Outline} 
The format of the paper is as follows.
In \cref{secant varieties}, we introduce some notation and recall Terracini's famous lemma, which is vital for computing dimensions of secant varieties.
We continue in \cref{finite calculus} by summarizing some useful results from finite calculus whose connection to dimensions of intersections of generic linear subspaces are foundational to the Brambilla--Ottaviani lattices we introduce in \cref{BO lattices}. 
Such lattices generalize a technique from \cite{BO} that has subsequently been adapted by Abo and his students for a variety of related problems.
In \cref{chow BO lattices}, we focus on the Chow variety and survey previous results which rely on Brambilla--Ottaviani lattices.
Finally, in \cref{verification}, we describe the induction to prove \cref{main_result} as well as the process used to verify the base cases of this induction, completing the proof.

\section{Secant varieties and Terracini's lemma}
\label{secant varieties}

Suppose $U\subset V$ are vector spaces of dimensions $M+1$ and $N+1$, respectively. Then $\PP U$ is a \textit{linear subspace} or $M$-\textit{plane} in the projective space $\PP V=\PP^N$. 
Conversely, if $P\subset\PP^N$ is a linear subspace and $P=\PP U$ for some vector space $U$, then $U=\widehat P$, the \textit{affine cone} of $X$.
If $X_1,\ldots,X_s\subset\PP^N$ are projective varieties, then their \textit{linear span} is the smallest linear subspace of $\PP^N$ containing their union, denoted $\langle X_1,\ldots,X_s\rangle$.

\begin{definition}
  Suppose $X\subset\PP^N$ is a projective variety.  Its $s$th \textit{secant variety} is
  \begin{equation*}
    \sigma_s(X)=\overline{\bigcup_{p_1,\ldots,p_s\in X}\langle p_1,\ldots,p_s\rangle},
  \end{equation*}
  i.e., the Zariski closure of the union of all $(s-1)$-planes through $s$ points on $X$.
\end{definition}

By a straightforward dimension count, we see that
\begin{equation*}
  \dim\sigma_s(X)\leq\min\{s(\dim X + 1) - 1,N\}.
\end{equation*}

\begin{definition}\label{def_expdim}
  The \textit{expected dimension} of $\sigma_s(X)$, denoted $\expdim\sigma_s(X)$, is the right hand side of the above inequality.  If $\dim\sigma_s(X)=\expdim\sigma_s(X)$, then $\sigma_s(X)$ is \textit{nondefective}.  Otherwise, it is \textit{defective}.
\end{definition}

An extremely useful classical result in determining whether a given secant variety is defective is \textit{Terracini's lemma}, which reduces the problem to linear algebra.

\begin{lemma}[Terracini \cite{Terracini1911}]\label{terracini}
  Suppose $X$ is an irreducible projective variety. Let $p_1,\ldots,p_s\in X$ be generic points and suppose $q$ is a generic point in the $(s-1)$-plane spanned by $p_1,\ldots,p_s$.  Then
  \begin{equation*}
    T_q \widehat{\sigma_s(X)} = \sum_{j=1}^s T_{p_j} \widehat{X}.
  \end{equation*}
\end{lemma}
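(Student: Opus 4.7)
\medskip

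The plan is to realize $\widehat{\sigma_s(X)}$ as the closure of the image of a natural addition map and then compute the differential of this map at a generic point. Concretely, I would consider
\[
  \Phi : \widehat{X} \times \cdots \times \widehat{X} \longrightarrow \widehat{\sigma_s(X)}, \qquad
  (v_1,\ldots,v_s) \longmapsto v_1 + \cdots + v_s,
\]
where the $s$ copies of the affine cone $\widehat{X}$ are taken. Because $\widehat{X}$ is a cone, any $q$ in the $(s-1)$-plane $\langle p_1,\ldots,p_s\rangle$ can be written as $\lambda_1 p_1 + \cdots + \lambda_s p_s$ with $\lambda_j p_j \in \widehat{X}$, so by the very definition of $\sigma_s(X)$ the image of $\Phi$ is Zariski dense in $\widehat{\sigma_s(X)}$. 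My first step is therefore to justify that $\Phi$ is a dominant morphism onto $\widehat{\sigma_s(X)}$, absorbing the choice of coefficients into the factors $v_j = \lambda_j p_j$.

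Next I would compute the differential of $\Phi$. At a point $(v_1,\ldots,v_s)$ with each $v_j$ a smooth point of $\widehat{X}$, the tangent space to $\widehat{X}^{\,s}$ factors as $T_{v_1}\widehat{X} \oplus \cdots \oplus T_{v_s}\widehat{X}$ inside $(\mathbb{A}^{N+1})^s$, and $d\Phi$ is the coordinatewise sum map
\[
  d\Phi(w_1,\ldots,w_s) = w_1 + \cdots + w_s.
\]
Since the image of any differential is contained in the tangent space of the image, this immediately yields the inclusion $\sum_{j=1}^s T_{p_j}\widehat{X} \subseteq T_q \widehat{\sigma_s(X)}$ whenever $p_j$ is a smooth point of $X$ and $q = \Phi(\lambda_1 p_1,\ldots,\lambda_s p_s)$.

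For the reverse inclusion I would use a generic-rank argument. Because $X$ is irreducible, so is $\widehat{X}^{\,s}$, and dominance of $\Phi$ means that the generic rank of $d\Phi$ equals $\dim \widehat{\sigma_s(X)}$. On the open set where each $p_j$ is smooth on $X$, generic smoothness (valid in characteristic zero) ensures that this generic rank is attained, so for generic $(p_1,\ldots,p_s)$ and generic $q$ in their affine span we have
\[
  \dim \sum_{j=1}^s T_{p_j}\widehat{X} = \dim T_q \widehat{\sigma_s(X)},
\]
upgrading the inclusion to equality.

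The main obstacle in this plan is the last step: simply taking differentials only gives one inclusion, and the reverse inclusion hinges on dominance of $\Phi$ together with generic smoothness. In particular one must check that the open set of $\widehat{X}^{\,s}$ on which $d\Phi$ has maximal rank meets the locus where each $p_j$ is a smooth point of $\widehat{X}$; irreducibility of $X$ guarantees that both loci are nonempty open, hence their intersection is too, and this is the key ingredient that makes ``generic'' choices of $p_1,\ldots,p_s$ work simultaneously for all the requirements.
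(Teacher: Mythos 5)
The paper does not prove Terracini's lemma; it states it as a cited classical result (referring to Terracini's 1911 paper) and uses it as a black box, so there is no in-paper proof to compare your argument against. That said, your sketch is the standard modern proof via the sum map $\Phi : \widehat{X}^{\,s} \to \widehat{\sigma_s(X)}$, and it is essentially correct: dominance of $\Phi$, the computation $d\Phi(w_1,\ldots,w_s)=\sum w_j$ giving the image $\sum_j T_{v_j}\widehat{X}$, and generic smoothness in characteristic zero to upgrade the inclusion to an equality of dimensions. One small point you elide: you need $T_{\lambda_j p_j}\widehat{X} = T_{p_j}\widehat{X}$ to rewrite $\sum_j T_{v_j}\widehat{X}$ as $\sum_j T_{p_j}\widehat{X}$; this holds because scaling by $\lambda_j \ne 0$ is an automorphism of the cone $\widehat{X}$ whose differential at the origin-free locus is scalar multiplication, which fixes every linear subspace through the origin. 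With that remark folded in, the argument is complete and matches the textbook treatment.
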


\section{Finite calculus}\label{finite calculus}
At a high level, Brambilla and Ottaviani's approach \cite{BO} for proving nondefectivity of secant varieties of third-order Veronese varieties consists of a three-step induction on the number of variables by partitioning $k_n$ generic points on $\mathcal{V}_{3,n} = v_3(\PP^n)$ and specializing them to the intersection of $v_3(\PP^n)$ with three special linear subspaces of codimension $\binom{n+3}{3}-\binom{n}{3}$. Terracini's lemma is then invoked at the specialized points to bound the dimension of $\sigma_s(\mathcal{V}_{3,n})$ from below. In \cite{BO} the main new idea was that this particular three-step induction ``has the advantage to avoid the arithmetic problems [that arise when specializing points].'' Indeed, with this setup, for $n \not\equiv 2 \mod 3$, we have that $k_n = \frac{1 + \dim \PP^n}{1 + \dim \mathcal{V}_{3,n}}$ is integer so that the $k_n$-secant variety of $\mathcal{V}_{3,n}$ is expected to precisely fill up the ambient space $\PP^{\binom{n+3}{3}}$ while $k_n-1$ points will not. This greatly simplifies the induction strategy because only three uniform specialization strategies are required for respectively $n = 3p + 0$, $n=3p+1$, and $n=3p+2$. ``This simple arithmetic remark'' of Brambilla and Ottaviani is a consequence of a more general connection between finite differences and intersections of linear subspaces, which we discuss next.

\begin{definition}\label{def_bd}
  Suppose $f:\ZZ\rightarrow\ZZ$ and fix a constant \textit{step size}
  $\ell$.  The \textit{backward difference operator} $\nabla$ is defined
  by
  \begin{align*}
    \nabla^0f(t) &= f(t)\\
    \nabla^1f(t) &= f(t) - f(t-\ell)\\
                 &\:\:\vdots\\
    \nabla^if(t) &=\nabla^{i-1}f(t)-\nabla^{i-1}f(t-\ell)=\sum_{j=0}^i(-1)^j\binom{i}{j}f(t-j\ell).
  \end{align*}
  We may also denote $\nabla^1$ simply by $\nabla$.
\end{definition}

\begin{proposition}[Newton backward difference formula]\label{newton}
  \begin{equation*}
    f(t)=\sum_{j=0}^{n}\binom{n}{j}\nabla^{n-j}f(t-j\ell)
  \end{equation*}
\end{proposition}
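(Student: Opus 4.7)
The plan is to derive this identity via operator calculus. Introduce the backward shift $E^{-1}$ defined by $E^{-1}f(t) = f(t-\ell)$; then \cref{def_bd} reads $\nabla = I - E^{-1}$, where $I$ is the identity operator, and hence $I = \nabla + E^{-1}$. Since $E^{-1}$ commutes with itself, $\nabla$ and $E^{-1}$ commute, so the binomial theorem for commuting operators applies and gives
$$I = I^n = (\nabla + E^{-1})^n = \sum_{j=0}^n \binom{n}{j} \nabla^{n-j} E^{-j}.$$
Applying both sides to $f$ and using $E^{-j} f(t) = f(t-j\ell)$ yields precisely the claimed formula.

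As an alternative, the identity admits a direct proof by induction on $n$. The base case $n=0$ is immediate from $\nabla^0 f = f$. For the inductive step, start from the right-hand side for $n+1$, split $\binom{n+1}{j} = \binom{n}{j} + \binom{n}{j-1}$ by Pascal's rule, reindex the second sum by $j' = j-1$, and then pair terms using the one-step recursion $\nabla^{n+1-j} f(t-j\ell) = \nabla^{n-j} f(t-j\ell) - \nabla^{n-j} f(t-(j+1)\ell)$ built into \cref{def_bd}. The telescoping collapses the sum to $\sum_{j=0}^n \binom{n}{j} \nabla^{n-j} f(t-j\ell)$, which equals $f(t)$ by the inductive hypothesis.

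There is no substantive obstacle here: the statement is a purely formal consequence of the commutativity of $\nabla$ and $E^{-1}$ (equivalently, of Pascal's rule together with the defining recursion of $\nabla$). I would present the operator-calculus derivation in the paper, as it is the shortest and most transparent, with the induction argument mentioned only if the reader prefers a manipulation that stays entirely within the language of \cref{def_bd}.
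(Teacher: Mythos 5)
Your argument is correct. Note, however, that the paper does not actually supply a proof of this proposition: it is quoted as a classical fact from finite calculus and used without demonstration, so there is nothing to compare against. Both of your derivations — the operator-calculus expansion $I=(\nabla+E^{-1})^n$ and the Pascal's-rule induction — are valid and standard; the operator identity is the cleaner one, and, as you observe, commutativity of $\nabla$ and $E^{-1}$ (or equivalently Pascal's rule plus the defining recursion $\nabla g(t)=g(t)-g(t-\ell)$) is the only ingredient needed. One small point worth making explicit if you write up the induction: in the telescoping step you need the usual conventions $\binom{n}{-1}=\binom{n}{n+1}=0$ so that the reindexed sums line up over $j=0,\dots,n$.
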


\begin{definition}
  Consider a function $f:\ZZ\rightarrow\ZZ$.  If
  \begin{equation*}
    f(t) = \begin{cases}
      f_0(t) &\text{if } t\equiv 0\pmod\ell\\
       & \vdots \\
      f_{\ell-1}(t) &\text{if } t\equiv\ell - 1\pmod\ell,
    \end{cases}
  \end{equation*}
  where each $f_i$ is a polynomial function of degree $d$, then $f$ is
  a \textit{quasipolynomial} function with degree $d$ (denoted $\deg f$)
  and \textit{quasiperiod} $\ell$.
  Further, we will assume that all $f_i$ have a common leading
  coefficient which we denote by $\LC(f)$.

  For brevity, we will refer to a quasipolynomial function with quasiperiod
  $\ell$ as $\ell$-quasipolynomial. An $\ell$-quasipolynomial function of degree 1 is \textit{$\ell$-quasilinear} and one of degree 2 is called \textit{$\ell$-quasiquadratic}, and so on.
\end{definition}

Quasipolynomials are also known as \textit{pseudopolynomials} or \textit{polynomials on residue classes} (PORCs)
\cite{stanley}, and have applications to a wide variety of areas
\cite{woods}.
 
The following useful fact is a result of the power rule from finite calculus.

\begin{proposition}\label{powerrule}
  Suppose $f:\ZZ\rightarrow\ZZ$ is $\ell$-quasipolynomial.  If $\deg f = d$ and $\LC(f)=a$, then $\nabla^df(t) = a\ell^dd!$ and $\nabla^{d+1}f(t)=0$.
\end{proposition}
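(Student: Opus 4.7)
The plan is to prove both assertions simultaneously by induction on $d = \deg f$, using the fact that $\nabla$ reduces the degree of an $\ell$-quasipolynomial by one while multiplying the leading coefficient by $d\ell$. The key observation that makes this work is that the step size of $\nabla$ is chosen to equal the quasiperiod $\ell$, so $t$ and $t - \ell$ always lie in the same residue class modulo $\ell$. Consequently, if $t \equiv i \pmod{\ell}$, then $\nabla f(t) = f_i(t) - f_i(t - \ell)$ is a genuine polynomial difference on a single branch, not a mixture of different branches.

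For the base case $d = 0$, each $f_i$ equals the constant $a$, so $\nabla^0 f(t) = a = a \ell^0 \cdot 0!$ and $\nabla f(t) = a - a = 0$, confirming both claims. For the inductive step, I would write $f_i(t) = a t^d + g_i(t)$ with $\deg g_i \le d - 1$, and then compute, for $t \equiv i \pmod{\ell}$,
\begin{equation*}
  \nabla f(t) = f_i(t) - f_i(t - \ell) = a \bigl[t^d - (t - \ell)^d\bigr] + \bigl[g_i(t) - g_i(t - \ell)\bigr].
\end{equation*}
The binomial theorem gives $t^d - (t-\ell)^d = d\ell \, t^{d-1} + (\text{lower-order terms})$, while $g_i(t) - g_i(t-\ell)$ has degree at most $d - 2$. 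Hence the restriction of $\nabla f$ to the residue class $i$ is a polynomial of degree exactly $d - 1$ with leading coefficient $a d \ell$, and this leading coefficient does not depend on $i$. Thus $\nabla f$ is $\ell$-quasipolynomial with $\deg(\nabla f) = d - 1$ and $\LC(\nabla f) = a d \ell$.

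Applying the inductive hypothesis to $\nabla f$ then yields
\begin{equation*}
  \nabla^d f(t) = \nabla^{d-1}\bigl(\nabla f\bigr)(t) = (a d \ell)\, \ell^{d-1} (d-1)! = a \ell^d \, d!,
\end{equation*}
which is constant in $t$; applying $\nabla$ one further time gives $\nabla^{d+1} f(t) = 0$. There is no genuine obstacle in this argument: it is the finite-calculus analogue of the classical power rule, and the only subtlety worth checking is that the matching of step size and quasiperiod is precisely what guarantees that each application of $\nabla$ reduces to a polynomial computation on a single residue class, so that the common leading coefficient hypothesis propagates cleanly through the induction.
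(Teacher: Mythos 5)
Your proof is correct. The paper itself gives no proof of \cref{powerrule}; it simply remarks that the result is ``a result of the power rule from finite calculus'' and leaves the verification to the reader. Your argument supplies exactly the missing details: the induction on degree, together with the observation that choosing the step size of $\nabla$ to equal the quasiperiod $\ell$ keeps $t$ and $t-\ell$ in the same residue class, so each application of $\nabla$ is a single-branch polynomial difference. The computation $t^d - (t-\ell)^d = d\ell\,t^{d-1} + (\text{lower order})$ correctly shows that $\nabla f$ is again $\ell$-quasipolynomial of degree $d-1$ with the common leading coefficient $a d\ell$, and the base case $d=0$ and the inductive step then deliver both $\nabla^d f(t) = a\ell^d d!$ and $\nabla^{d+1} f(t) = 0$. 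One small point worth making explicit, which you handle implicitly: because the new leading coefficient $a d\ell$ is nonzero and independent of the residue class $i$, the quantity $\nabla f$ genuinely has well-defined degree $d-1$ and a common leading coefficient in the sense the paper requires, so the inductive hypothesis applies. There is nothing to correct; this is a clean and complete proof of a statement the paper takes for granted.
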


  The backward difference operator has a nice application to \textit{lattices} of vector spaces. Consider a collection of subspaces of a given vector space. These subspaces generate a \textit{modular lattice} with addition as the join operation, intersection as the meet operation, and $\subseteq$ as partial order.  That is, for every triple $U_1,U_2,U_3$ of subspaces in the lattice with $U_3\subseteq U_1$, we have
  \begin{equation*}
    U_1\cap(U_2+U_3) = (U_1\cap U_2) + (U_1\cap U_3).
  \end{equation*}
However, such a lattice is not \textit{distributive} in the sense that we cannot remove the condition $U_3\subseteq U_1$. Consider, for example, three lines in a plane.
Nevertheless, if we choose our subspaces nicely, then we will have a distributive lattice.  In this case, we may use the inclusion-exclusion principle to compute the dimensions of their sums. See \cite[\S 1.7]{PP} for further discussion of lattices of vector spaces.

Suppose we have a function $N : \NN \to \NN$ and a step size $\ell$.  For every integer $t\geq\ell$, choose an $i\leq \frac{t}{\ell}$. If there exists subspaces $U_1,\ldots, U_i$ of a vector space which generate a distributive lattice and satisfy
\begin{equation*}
  \dim\bigcap_{j\in I}U_j = N(t - |I|\ell)
\end{equation*}
for each $I\subset\{1,\ldots, i\}$, then we call this lattice an $(N,\ell)$-\textit{lattice}.

\begin{lemma}\label{lem_bd_vector_spaces}
  If $U_1,\ldots, U_i$ generate an $(N,\ell)$-lattice, then
  \begin{equation*}
    \dim\sum_{j=1}^i U_j = N(t) - \nabla^iN(t).
  \end{equation*}
\end{lemma}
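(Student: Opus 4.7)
The plan is to apply the inclusion-exclusion principle for dimensions in a distributive lattice of vector subspaces. Since $U_1,\ldots,U_i$ generate such a lattice by hypothesis, I would first establish (or cite, e.g. \cite[\S 1.7]{PP}) the identity
\begin{equation*}
  \dim\sum_{j=1}^i U_j = \sum_{\emptyset\neq I\subseteq\{1,\ldots,i\}} (-1)^{|I|+1}\dim\bigcap_{j\in I} U_j.
\end{equation*}
A short induction on $i$ suffices for this: the elementary two-subspace formula $\dim(U_i + W) = \dim U_i + \dim W - \dim(U_i\cap W)$, applied with $W=\sum_{j<i}U_j$ and combined with distributivity to rewrite $U_i\cap W = \sum_{j<i}(U_i\cap U_j)$, reduces the claim to the inductive hypothesis applied to two distributive sublattices of size $i-1$.

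Next, I would invoke the defining feature of an $(N,\ell)$-lattice: $\dim\bigcap_{j\in I}U_j = N(t-|I|\ell)$ depends only on the cardinality $|I|$. Grouping the $\binom{i}{k}$ subsets of size $k$ therefore collapses the alternating sum to
\begin{equation*}
  \dim\sum_{j=1}^i U_j = \sum_{k=1}^i (-1)^{k+1}\binom{i}{k} N(t-k\ell).
\end{equation*}
Finally, comparison with \cref{def_bd}, which gives $\nabla^i N(t) = \sum_{k=0}^i (-1)^k\binom{i}{k}N(t-k\ell)$, shows that the right-hand side is precisely $N(t) - \nabla^i N(t)$, completing the proof.

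The only conceptually nontrivial step is the inclusion-exclusion identity for dimensions in a distributive lattice, and even this is essentially standard; once it is in hand the $(N,\ell)$-lattice hypothesis has been engineered so that the intersection dimensions collapse exactly into the alternating pattern of a backward difference, and the rest of the argument is pure bookkeeping. The main value of the lemma lies not in the proof but in its use: it converts a dimension count for sums of specialized tangent spaces (as arising from Terracini's lemma) into a combinatorial expression that, by \cref{powerrule}, vanishes cleanly when $i$ exceeds the degree of the quasipolynomial $N$.
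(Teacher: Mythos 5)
Your proof is correct and follows essentially the same route as the paper: inclusion-exclusion for dimensions in a distributive lattice of subspaces, collapse of the alternating sum using the $(N,\ell)$-lattice condition that intersection dimensions depend only on $|I|$, and identification with the backward-difference expansion from \cref{def_bd}. The only difference is that you sketch a justification of the inclusion-exclusion identity (which the paper takes for granted, citing \cite[\S 1.7]{PP} earlier in the section), a reasonable addition but not a departure in method.
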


\begin{proof}
  By inclusion-exclusion, we have
  \begin{align*}
    \dim\sum_{j=1}^iU_j &= \sum_{\emptyset\neq I\subset\{1,\ldots, i\}}(-1)^{|I|-1}\dim\bigcap_{j\in I}U_j\\
                        &= \sum_{j=1}^i(-1)^{j-1}\binom{i}{j}N(t-j\ell) \\
                        &= N(t) -\sum_{j=0}^i(-1)^j\binom{i}{j}N(t-j\ell)\\
                        &= N(t) - \nabla^iN(t);
  \end{align*}
  the last step is by \cref{def_bd}.
\end{proof}

Note that for large enough $i$, we expect the left-hand side of the equation in the statement of \cref{lem_bd_vector_spaces} to be $N(t)$, the dimension of the ambient space, so that $\nabla^i N(t)=0$. Because of \cref{powerrule}, we will be primarily interested in the case where $N$ is $\ell$-quasipolynomial with degree at least $i-1$.

\begin{example}
  If $N(t) = t + 1$, then generic subspaces of $\kk^{t+1}$ of codimension $\ell$ will generate an $(N,\ell)$-lattice.
\end{example}

The following two examples will be especially important for our purposes.

\begin{example}\label{ex_lattice_fixed_deg}
  If $N(t) = \binom{t + d}{d}$ for fixed $d$ and $U_1,\ldots,U_k$ are generic subspaces of $\kk^{t+1}$ of codimension $\ell$ with $k\le\frac{t}{\ell}$, then $S^dU_1,\ldots,S^dU_k$ will generate an $(N,\ell)$-lattice as subspaces of $S^d\kk^{t+1}$.  Indeed, by \cite[Proposition 1.7.1]{PP}, the distributivity of the lattice in $\kk^{t+1}$ is equivalent to the existence of a basis of $\kk^{t+1}$ containing subsets which span each of the $U_j$.  The degree-$d$ monomials generated by these basis vectors provide the basis of $S^d\kk^{t+1}$ needed to show that this second lattice is distributive as well.
\end{example}

\begin{example}\label{ex_lattice_fixed_dim}
  If $N(t) = \binom{n + t}{t}$ for fixed $n$ and $f_1,\dots,f_i\in S^\ell\kk^{n+1}$ are generic, then $f_1S^{t-\ell}\kk^{n+1},\ldots,f_iS^{t-\ell}\kk^{n+1}$ will generate an $(N,\ell)$-lattice as subspaces of $S^t\kk^{n+1}$.
\end{example}

\section{Brambilla--Ottaviani lattices}\label{BO lattices}

In \cite[section 5]{BO}, Brambilla and Ottaviani presented a simplified proof of the Alexander--Hirschowitz theorem \cite{AH} for cubics by specializing points on lattices of linear subspaces. This specific method was adapted to solve similar problems in \cite{abo,AV,wan,thesis}.
In this section, we generalize this technique to any family of projective varieties that admits such lattices.

\subsection{The lattice}
The first step consists of defining a configuration of linear spaces that generalizes the construction from \cite[section 5]{BO}.
Fix some $\ell\in\NN$. Throughout this paper, the backward difference operator $\nabla$ has \textit{step size} $\ell$. Choose $K_0\in\NN$ and define $t_0=\ell K_0+1$. Moreover, for each $t\in\NN$, we set $K(t)=\min\{\lceil t/\ell\rceil - 1,K_0\}$.
Choose an $\ell$-quasipolynomial function $N$ of degree $K_0$ and an $\ell$-quasilinear function $m$.  We say that a family $\{X(t):t\in\NN\}$ of varieties is an $(N, m)$-\textit{family} if $X(t)\subset\PP^{N(t)-1}$ and $\dim X(t)=m(t) - 1$ for all $t\in\NN$.

\begin{definition}\label{def_bo_lattice}
  An $(N,m)$-family $\{X(t):t\in\NN\}$ admits a \textit{Brambilla--Ottaviani lattice} if for every $t>\ell$, there exist ($N(t-\ell)-1)$-planes $P_1(t),\ldots,P_{K(t)}(t)$ in $\PP^{N(t)-1}$ such that
  \begin{enumerate}[(a)]
  \item $\widehat{P}_1(t),\ldots,\widehat{P}_{K(t)}(t)$ generate an $(N,\ell)$-lattice, and
  \item $X(t)\cap\bigcap_{j\in I}P_j(t)\cong X(t-|I|\ell)$ for each nonempty $I\subset\{1,\ldots,K(t)\}$.
  \end{enumerate}
  Furthermore, if $t > t_0$, then there exists an $(N(t_0)-1)$-plane $P'(t)\subset\PP^{N(t)-1}$ such that
  \begin{enumerate}[(a)]
    \setcounter{enumi}{2}
  \item $\widehat{P}'(t)\cap \widehat{P}_1(t),\ldots,\widehat{P}'(t)\cap \widehat{P}_{K(t)}(t)$ generate an $(N,\ell)$-lattice in $\widehat{P}'(t)$, and
  \item $X(t)\cap P'(t)\cap\bigcap_{j\in I}P_j(t)\cong X(t_0-|I|\ell)$ for each $I\subset\{1,\ldots,K_0\}$.
  \end{enumerate}
\end{definition}

Choose an $\ell$-quasipolynomial function $s$ with degree $K_0-1$ and leading coefficient $\LC(N)/\LC(m)$. We place points on $X(t) \subset \PP^{N(t)-1}$ as follows.
\begin{enumerate}[(i)]
 \item If $t \le t_0$, then take $\nabla^{K(t)} s(t)$ generic points in $X(t)$ and for every nonempty subset $I\subset\{1,\ldots,K(t)\}$ pick an additional set of $\nabla^{K(t)-|I|}s(t-|I|\ell)$ generic points in $X(t)\cap\bigcap_{j\in I}P_j(t)$.
 \item If $t>t_0$, then pick $\nabla^{K(t)} s(t)$ generic points in $X(t) \cap P'(t)$ and for every nonempty subset $I\subset\{1,\ldots,K(t)\}$ choose an additional set of
 $\nabla^{K(t)-|I|}s(t-|I|\ell)$ generic points on $X(t)\cap\bigcap_{j\in I}P_j(t)\cap P'(t)$.
\end{enumerate}

Counting the number of points thusly placed, in both cases we find that
\begin{equation*}
  \sum_{j=0}^{K(t)}\binom{K(t)}{j}\nabla^{K(t)-j}s(t-j\ell)=s(t)
\end{equation*}
because of \cref{newton}. The foregoing configuration thus partitions $s(t)$ points. For future reference, we let $Z(t)$ denote the set of all these points for a fixed $t$.

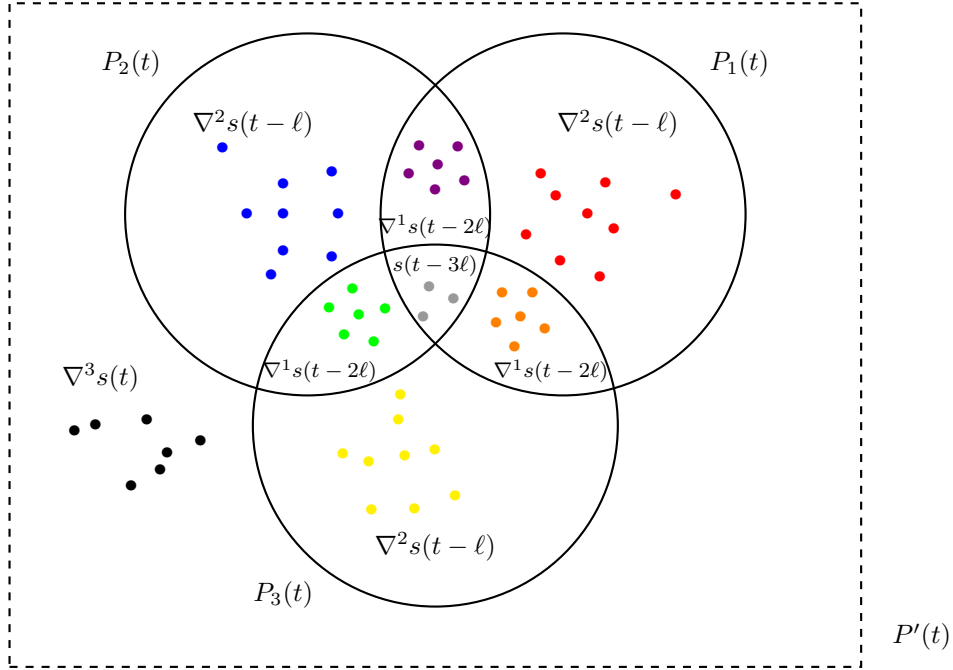
\begin{figure}
\begin{center}
\begin{tikzpicture}[scale=.8]
 \draw[thick,black] (2.1,2.5) circle (3);
 \draw[thick,black] (-2.1,2.5) circle (3);
 \draw[thick,black] (0,-1.0) circle (3);
 \draw[thick,dashed,black] (-7,-5) rectangle (7,6);
\node at (-5.5,-0.2) {$\nabla^3 s(t)$};
\node at (3,4) {$\nabla^{2} s(t - \ell)$};
\node at (-3,4) {$\nabla^{2} s(t - \ell)$};
\node at (0,-3) {$\nabla^{2} s(t - \ell)$};
\node at (0,1.65) {{\footnotesize$s(t-3\ell)$}};
\node at (0,2.3) {{\footnotesize$\nabla^{1} s(t - 2\ell)$}};
\node at (1.9,-.1) {{\footnotesize$\nabla^{1} s(t - 2\ell)$}};
\node at (-1.9,-.1) {{\footnotesize$\nabla^{1} s(t - 2\ell)$}};
\node at (5,5) {$P_1(t)$};
\node at (-5,5) {$P_2(t)$};
\node at (-2.5,-3.8) {$P_3(t)$};
\node at (8,-4.5) {$P'(t)$};
\begin{scope}[shift={(2.5,2.5)},rotate=60,color=red]
\node at (-.8,-.7) {$\bullet$};
\node at (0,-.5) {$\bullet$};
\node at (1,-1.1) {$\bullet$};
\node at (-0.9,0) {$\bullet$};
\node at (0,0) {$\bullet$};
\node at (0.6,0) {$\bullet$};
\node at (-.8,.7) {$\bullet$};
\node at (0,.6) {$\bullet$};
\node at (0.2,1) {$\bullet$};
\end{scope}
\begin{scope}[shift={(-2.5,2.5)},rotate=180,color=blue]
\node at (-.8,-.7) {$\bullet$};
\node at (0,-.5) {$\bullet$};
\node at (1,-1.1) {$\bullet$};
\node at (-0.9,0) {$\bullet$};
\node at (0,0) {$\bullet$};
\node at (0.6,0) {$\bullet$};
\node at (-.8,.7) {$\bullet$};
\node at (0,.6) {$\bullet$};
\node at (0.2,1) {$\bullet$};
\end{scope}
\begin{scope}[shift={(-0.5,-1.5)},rotate=100,color=yellow]
\node at (-.8,-.7) {$\bullet$};
\node at (0,-.5) {$\bullet$};
\node at (1,-0.1) {$\bullet$};
\node at (-0.9,0) {$\bullet$};
\node at (0,0) {$\bullet$};
\node at (0.6,0) {$\bullet$};
\node at (-.8,.7) {$\bullet$};
\node at (0,.6) {$\bullet$};
\node at (0.2,1) {$\bullet$};
\end{scope}
\begin{scope}[shift={(1,0.7)},rotate=0,color=orange]
\node at (0,0) {$\bullet$};
\node at (0.1,0.5) {$\bullet$};
\node at (0.3,-0.4) {$\bullet$};
\node at (0.4,0.1) {$\bullet$};
\node at (0.6,0.5) {$\bullet$};
\node at (0.8,-0.1) {$\bullet$};
\end{scope}
\begin{scope}[shift={(-1.5,0.5)},rotate=40,color=green]
\node at (0,0) {$\bullet$};
\node at (0.1,0.5) {$\bullet$};
\node at (0.3,-0.4) {$\bullet$};
\node at (0.4,0.1) {$\bullet$};
\node at (0.6,0.5) {$\bullet$};
\node at (0.8,-0.1) {$\bullet$};
\end{scope}
\begin{scope}[shift={(0,2.9)},rotate=70,color=violet]
\node at (0,0) {$\bullet$};
\node at (0.1,0.5) {$\bullet$};
\node at (0.3,-0.4) {$\bullet$};
\node at (0.4,0.1) {$\bullet$};
\node at (0.6,0.5) {$\bullet$};
\node at (0.8,-0.1) {$\bullet$};
\end{scope}
\begin{scope}[shift={(-.5,1.2)},rotate=0,color=black!40]
\node at (0.3,-0.4) {$\bullet$};
\node at (0.4,0.1) {$\bullet$};
\node at (0.8,-0.1) {$\bullet$};
\end{scope}
\begin{scope}[shift={(-5,-2)},rotate=50]
\node at (1.3,-0.4) {$\bullet$};
\node at (0.4,1.1) {$\bullet$};
\node at (0.8,-0.1) {$\bullet$};
\node at (0.5,-0.2) {$\bullet$};
\node at (0.1,1.3) {$\bullet$};
\node at (0.0,0.0) {$\bullet$};
\node at (1.0,0.5) {$\bullet$};
\end{scope}

\end{tikzpicture}
\end{center}
\caption{Illustration of the point configuration with $K(t)=3$. The number of points is $s(t) = \binom{3}{0} \nabla^3 s(t) + \binom{3}{1} \nabla^2(s - \ell) + \binom{3}{2} \nabla s(t - 2\ell) + \binom{3}{3} s(t - 3\ell)$. The first term corresponds to the black points, the second term to the red, blue and yellow points, the third term to the violet, green and orange points, and the last term to the gray points. If $t \le t_0$, then the dashed rectangle representing $P'(t)$ is not present.}
\end{figure}

For all $t$ and all $i\in\{0,\ldots,K(t)\}$, we introduce the vector space
\begin{equation} \label{eqn_original_A}
  A_i(t)=\sum_{j=1}^i\widehat{P}_j(t) + \sum_{p\in Z(t)} T_p \widehat{X}(t),
\end{equation}
which will be the main focus of this paper. The main lemma of this paper is that
\begin{equation*}
  a_i(t) = N(t) - \nabla^i N(t) + i \nabla m(t) \cdot \nabla^{i-1}s(t-\ell) + m(t) \cdot \nabla^i s(t).
\end{equation*}
constitutes an upper bound on the dimension of $A_i(t)$.
\begin{lemma}\label{lem_main_lemma}
  For all $t\in\NN$, $\dim A_i(t)\leq a_i(t)$.
\end{lemma}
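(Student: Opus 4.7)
The plan is to exploit the quotient $V := \kk^{N(t)} / \sum_{j=1}^i \widehat{P}_j(t)$. By \cref{lem_bd_vector_spaces} applied to the $(N,\ell)$-lattice generated by the $\widehat{P}_j(t)$, the subspace $\sum_{j=1}^i \widehat{P}_j(t)$ has dimension $N(t) - \nabla^i N(t)$. Letting $\pi$ denote the projection onto $V$, we obtain
\[
\dim A_i(t) = N(t) - \nabla^i N(t) + \dim \pi\Bigl(\sum_{p \in Z(t)} T_p \widehat{X}(t)\Bigr) \le N(t) - \nabla^i N(t) + \sum_{p \in Z(t)} \dim \pi(T_p \widehat{X}(t)),
\]
so the problem reduces to bounding each term of the final sum and counting how many points of $Z(t)$ give each value.

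For each $p \in Z(t)$, I would classify $p$ by $\tau(p) := |\{j \in \{1, \ldots, i\} : p \in P_j(t)\}|$. If $\tau(p) = 0$, then trivially $\dim \pi(T_p \widehat{X}(t)) \le m(t)$. If $\tau(p) = 1$ with $p \in P_{j_0}(t)$, condition (b) of \cref{def_bo_lattice} identifies $X(t) \cap P_{j_0}(t)$ with $X(t-\ell)$, so $T_p \widehat{X}(t) \cap \widehat{P}_{j_0}(t) \supseteq T_p \widehat{(X(t) \cap P_{j_0}(t))}$, a subspace of dimension $m(t-\ell)$, whence $\dim \pi(T_p \widehat{X}(t)) \le m(t) - m(t-\ell) = \nabla m(t)$. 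The crucial case is $\tau(p) \ge 2$, for which I claim $T_p \widehat{X}(t) \subseteq \sum_{j=1}^i \widehat{P}_j(t)$. Picking any $j_1, j_2 \le i$ with $p \in P_{j_1}(t) \cap P_{j_2}(t)$, the subspaces $T_p \widehat{X}(t) \cap \widehat{P}_{j_r}(t)$ each have dimension $m(t-\ell)$, while their common intersection $T_p \widehat{X}(t) \cap \widehat{P}_{j_1}(t) \cap \widehat{P}_{j_2}(t)$ has dimension $m(t-2\ell)$. Their sum thus has dimension $2m(t-\ell) - m(t-2\ell) = m(t) - \nabla^2 m(t) = m(t)$, where $\nabla^2 m = 0$ by \cref{powerrule} since $m$ is $\ell$-quasilinear. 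This sum coincides with $T_p \widehat{X}(t)$ itself and lies in $\widehat{P}_{j_1}(t) + \widehat{P}_{j_2}(t) \subseteq \sum_{j=1}^i \widehat{P}_j(t)$, killing $\pi(T_p \widehat{X}(t))$.

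It remains to count points by type. Summing $\nabla^{K(t) - |I|} s(t - |I|\ell)$ over $I \subseteq \{i+1, \ldots, K(t)\}$ and applying \cref{newton} to the function $\nabla^i s$ yields exactly $\nabla^i s(t)$ type-$0$ points; similarly, summing over $I$ with $|I \cap \{1, \ldots, i\}| = 1$ and applying \cref{newton} to $\nabla^{i-1} s(\,\cdot\, - \ell)$ yields $i \cdot \nabla^{i-1} s(t - \ell)$ type-$1$ points. Combining everything produces $\dim A_i(t) \le N(t) - \nabla^i N(t) + m(t)\nabla^i s(t) + i\nabla m(t) \nabla^{i-1} s(t-\ell) = a_i(t)$.

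The main subtlety is the $\tau(p) \ge 2$ case, which requires the transversality identities $T_p \widehat{X}(t) \cap \widehat{P}_{j_r}(t) = T_p \widehat{(X(t) \cap P_{j_r}(t))}$ and $T_p \widehat{X}(t) \cap \widehat{P}_{j_1}(t) \cap \widehat{P}_{j_2}(t) = T_p \widehat{(X(t) \cap P_{j_1}(t) \cap P_{j_2}(t))}$, upgrading the containments above to equalities. These are not stated explicitly in \cref{def_bo_lattice} but should follow from the genericity implicit in the lattice conditions (a) and (b) via a Kleiman--Bertini-style argument applied to the linear sections $P_j(t)$.
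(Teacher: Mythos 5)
Your proposal is correct and takes essentially the same approach as the paper: classify points of $Z(t)$ by how many of $P_1(t),\ldots,P_i(t)$ they lie on, drop the tangent spaces at points lying on two or more of them (which are absorbed by $\widehat{P}_{j_1}(t)+\widehat{P}_{j_2}(t)$ via $\nabla^2 m=0$), and count the remaining points via \cref{newton}. The transversality subtlety you flag at the end is genuine, but it is equally implicit in the paper's own proof, which reads off the exact intersection dimensions $m(t-\ell)$ and $m(t-2\ell)$ from \cref{def_bo_lattice}(b) in the same way.
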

\begin{proof}
  Consider a point $[p] \in Z(t)\cap P_j(t)\cap P_k(t)$ for distinct $j,k\in\{1,\ldots,K(t)\}$ with $K(t)\geq 2$.  By \cref{def_bo_lattice}(b), $T_p \widehat{X}(t)$ intersects $\widehat{P}_j(t)$ and $\widehat{P}_k(t)$ each in dimension $m(t-\ell)$ and $\widehat{P}_j(t) \cap \widehat{P}_k(t)$ in dimension $m(t-2\ell)$.  Since $m$ is $\ell$-quasilinear, $\nabla^2m(t)=0$ by \cref{powerrule}.  It follows that
  {\small\begin{align*}
    \dim(T_p \widehat{X}(t)\cap(\widehat{P}_j(t)+\widehat{P}_k(t)))
    &=2m(t-\ell)-m(t-2\ell)\\
    &=m(t) - ( m(t) - m(t-\ell) ) + ( m(t-\ell) - m(t-2\ell) )\\
    &=m(t)-\nabla^2m(t) =m(t),
  \end{align*}}%
  i.e., $T_p \widehat{X}(t) \subset\widehat{P}_j(t)+\widehat{P}_k(t)$.
  So we may remove some unnecessary summands from the definition of $A_i(t)$. Specifically, we obtain
  \begin{equation} \label{eqn_simple_A}
    A_i(t) = \sum_{j=1}^i \widehat{P}_j(t) + \sum_{j=1}^i \sum_{[p]\in Z_{i,j}(t)} T_p \widehat{X}(t) +\sum_{[p]\in Z_i(t)} T_p \widehat{X}(t),
  \end{equation}
  where
  \begin{align*}
    Z_i(t)&=Z(t)\setminus(P_1(t)\cup\cdots\cup P_i(t)),\\
    Z_{i,j}(t)&= Z(t) \cap \left(P_j(t) \setminus(P_1(t)\cup\cdots\cup P_{j-1}(t)\cup P_{j+1}(t)\cup\cdots\cup P_i(t)) \right).
  \end{align*}
  Note that for $K(t)=1$, we have $Z_1(t)=Z(t)\setminus P_1(t)$ and $Z_{1,1}(t) = Z(t) \cap P_1(t)$ so that $Z_1(t) \cup Z_{1,1}(t)=Z(t)$ and the above decomposition of $A_i(t)$ holds as well.

  Next, we use \cref{eqn_simple_A} to bound the dimension. The first sum contributes
  \begin{equation*}
    \dim\sum_{j=1}^i\widehat{P}_j(t)=N(t)-\nabla^iN(t)
  \end{equation*}
  to $\dim A_i(t)$ because of \cref{lem_bd_vector_spaces}.

  The contribution of the second sum is determined next.
  By \cref{def_bo_lattice}(b), $T_p \widehat{X}(t)$ intersects $\widehat{P}_j(t)$ in dimension $m(t-\ell)$ for each $[p]\in Z_{i,j}(t)$. Consequently, $T_p \widehat{X}(t)$ modulo $\widehat{P}_j(t)$ adds at most $m(t)-m(t-\ell)=\nabla m(t)$ to the dimension of $A_i(t)$. The points in $Z_{i,j}(t)$ correspond to those subsets of $\{1,\ldots,K(t)\}$ that can be expressed as $I = \{j\} \cup J$ where $J$ is a subset of $\{i+1,\ldots,K(t)\}$. That is,
  \[
   Z_{i,j}(t) = \bigcup_{J \subset \{i+1,\ldots,K(t)\}} \left( Z(t) \cap \bigcap_{k \in J \cup \{j\}} P_k(t) \right).
  \]
  By the definition of the configuration of points $Z(t)$, we see that $Z_{i,j}(t)$ contains
  \begin{dmath*}
    \sum_{k=0}^{K(t)-i}\binom{K(t)-i}{k}\nabla^{K(t)-(k+1)}s(t-(k+1)\ell)
    =\sum_{k=0}^{K(t)-i}\binom{K(t)-i}{k}\nabla^{K(t)-i-k}(\nabla^{i-1}s(t-(k+1)\ell))
    =\nabla^{i-1}s(t-\ell)
  \end{dmath*}
  points, where the last equality is due to \cref{newton} and $\nabla^\alpha \nabla^\beta = \nabla^{\alpha+\beta}$ for all $\alpha, \beta \in \NN$. It follows that the second sum in \cref{eqn_simple_A} adds at most $i \nabla m(t) \cdot \nabla^{i-1} s(t-\ell)$ to the dimension of $A_i(t)$, modulo the first sum in \cref{eqn_simple_A}.

Finally, we see that $Z_i(t) = \cup_{J \subset \{i+1,\ldots,K(t)\}} Z(t) \cap (\cap_{j\in J} P_j(t))$, so that it contains
  \begin{dmath*}
    \sum_{j=0}^{K(t)-i}\binom{K(t)-i}{j}\nabla^{K(t)-j}s(t-j\ell)
    =\sum_{j=0}^{K(t)-i}\binom{K(t)-i}{j}\nabla^{K(t)-i-j}(\nabla^is(t-j\ell))
    =\nabla^i s(t)
  \end{dmath*}
  points, where the last step is due to \cref{newton}. The last sum in \cref{eqn_simple_A} thus contributes at most $m(t) \cdot \nabla^i s(t)$ to the dimension of $A_i(t)$.
\end{proof}

\subsection{The induction}
Since $A_i(t)$ is constructed by picking points generically, we expect that, unless it fills the ambient space $\kk^{N(t)}$, its dimension will attain the upper bound from \cref{lem_main_lemma}. For this reason we introduce the next definition.
\begin{definition}\label{defn_expdim}
The \textit{expected dimension} of $A_i (t)$ is
\[
\operatorname{expdim} A_i(t) := \min\{a_i(t),N(t)\}.
\]
We say that the statement $\statement{A}_i(t)$ is \textit{true} if $\dim A_i(t) = \operatorname{expdim} A_i(t)$ and \textit{false} otherwise.
The statement $\statement{A}_i(t)$ is called \textit{subabundant} if $a_i(t)\leq N(t)$, \textit{superabundant} if $a_i(t)\geq N(t)$, and \textit{equiabundant} if $a_i(t)=N(t)$.
If $s$ is not clear from context, then we write $\statement{A}_{i,s}(t)$.
\end{definition}

The main reason for studying $A_i(t)$ is the following lower bound for $i=0$:
\begin{equation} \label{eqn_lower_bound}
  \dim A_0(t) \le \dim\sigma_{s(t)}X(t) + 1
\end{equation}
which follows from \cref{eqn_original_A}, \cref{terracini}, and semicontinuity. Therefore, if 
\[
\dim A_0(t) = \operatorname{expdim} A_0(t) = \min\{ s(t)m(t), N(t) \} = \expdim\sigma_{s(t)}X(t) + 1, 
\]
then $\sigma_{s(t)}X(t)$ is nondefective. In other words, if the statement $\statement{A}_0(t)$ is true, then the secant variety $\sigma_{s(t)}(X(t))$ is nondefective.

In the next series of lemmata we develop the induction strategy.

\begin{lemma}\label{induction1}
  Suppose $i<K(t)$ and $t>\ell$. If $\statement{A}_i(t-\ell)$ and $\statement{A}_{i+1}(t)$ are both true and subabundant (respectively superabundant), then $\statement{A}_i(t)$ is true and subabundant (respectively superabundant).
\end{lemma}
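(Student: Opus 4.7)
\medskip

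\textbf{Plan.} The strategy is a standard la m\'ethode d'Horace style specialization on the plane $P_{i+1}(t)$ combined with the identity $a_i(t) + N(t-\ell) = a_{i+1}(t) + a_i(t-\ell)$ for the upper-bound function. Since $i < K(t)$, the plane $P_{i+1}(t)$ is available and $A_{i+1}(t) = A_i(t) + \widehat{P}_{i+1}(t)$. The Grassmann formula yields
\begin{equation*}
  \dim A_i(t) \;\geq\; \dim A_{i+1}(t) + \dim\bigl(A_i(t)\cap \widehat{P}_{i+1}(t)\bigr) - N(t-\ell).
\end{equation*}
The key geometric step is to exhibit, inside $\widehat{P}_{i+1}(t) \cong \kk^{N(t-\ell)}$, a configuration that realizes an instance of $A_i(t-\ell)$ for the family $\{X(\cdot)\}$ restricted to $P_{i+1}(t)$.

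To produce this configuration, I would combine the following ingredients from \cref{def_bo_lattice}. First, the subspaces $\widehat{P}_j(t)\cap\widehat{P}_{i+1}(t)$ for $j=1,\ldots,i$ play the role of the $\widehat{P}_j(t-\ell)$ of the $A_i(t-\ell)$ construction; they still generate an $(N,\ell)$-lattice by the lattice assumption \cref{def_bo_lattice}(a)-(c). Second, for each point $p\in Z(t)\cap P_{i+1}(t)$, the intersection $T_p\widehat{X}(t)\cap \widehat{P}_{i+1}(t)$ equals $T_p \widehat{X(t)\cap P_{i+1}(t)} = T_p\widehat{X}(t-\ell)$, by \cref{def_bo_lattice}(b). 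A count using $\nabla^\alpha\nabla^\beta=\nabla^{\alpha+\beta}$ (as in the proof of \cref{lem_main_lemma}) shows that the points of $Z(t)$ that lie on $P_{i+1}(t)$, after deleting the index $i+1$, reorganize precisely into a valid $Z(t-\ell)$ configuration for the $A_i(t-\ell)$ construction in $P_{i+1}(t)$; generic choices in the original construction are still generic after restriction. This gives
\begin{equation*}
  \dim\bigl(A_i(t)\cap \widehat{P}_{i+1}(t)\bigr) \;\geq\; \dim A_i(t-\ell).
\end{equation*}

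Next I would verify the arithmetic identity
\begin{equation*}
  a_i(t) + N(t-\ell) = a_{i+1}(t) + a_i(t-\ell),
\end{equation*}
which reduces, after substituting the formula for $a_i$ from \cref{lem_main_lemma}, to the two facts $\nabla^{i+1}N(t)=\nabla^i N(t)-\nabla^i N(t-\ell)$ (and similarly for $s$) and $\nabla m(t)=\nabla m(t-\ell)$ (as $m$ is $\ell$-quasilinear, so $\nabla^2m=0$ by \cref{powerrule}); all other terms telescope. In the subabundant case, this identity yields $a_i(t)\leq N(t)$ from $a_{i+1}(t)\leq N(t)$ and $a_i(t-\ell)\leq N(t-\ell)$, establishing subabundance of $\statement{A}_i(t)$, and the two hypotheses give $\dim A_{i+1}(t)=a_{i+1}(t)$ and $\dim A_i(t-\ell)=a_i(t-\ell)$, so the Grassmann bound delivers $\dim A_i(t)\geq a_i(t)$; combined with the reverse inequality from \cref{lem_main_lemma}, equality holds and $\statement{A}_i(t)$ is true.

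For the superabundant case, $\dim A_{i+1}(t)=N(t)$ and $\dim A_i(t-\ell)=N(t-\ell)$, so the lower bound on $A_i(t)\cap \widehat{P}_{i+1}(t)$ forces equality with the full ambient $\widehat{P}_{i+1}(t)$, whence $\widehat{P}_{i+1}(t)\subseteq A_i(t)$ and $A_i(t)=A_{i+1}(t)$ has dimension $N(t)$; the identity also gives $a_i(t)\geq N(t)$, so $\statement{A}_i(t)$ is true and superabundant. The principal obstacle in this plan is establishing the lower bound on $\dim(A_i(t)\cap\widehat{P}_{i+1}(t))$ honestly: one must verify that the induced point configuration in $P_{i+1}(t)$ is \emph{exactly} an $A_i(t-\ell)$ configuration (with the correct number of generic points in each stratum), which is a bookkeeping exercise in the backward differences of $s$ mirroring the one carried out in the proof of \cref{lem_main_lemma}.
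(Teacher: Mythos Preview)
Your proposal is correct and follows essentially the same route as the paper: both use $A_{i+1}(t)=A_i(t)+\widehat{P}_{i+1}(t)$, Grassmann's formula, the identification of the restriction to $\widehat{P}_{i+1}(t)$ with an $A_i(t-\ell)$ configuration, and the identity $a_i(t)+N(t-\ell)=a_{i+1}(t)+a_i(t-\ell)$ (which the paper verifies by an inline computation rather than stating as a separate identity). The only minor difference is that the paper asserts $A_i(t)\cap\widehat{P}_{i+1}(t)\cong A_i(t-\ell)$ outright and computes $\dim A_i(t)$ as an equality, whereas you more cautiously establish only the containment $\dim\bigl(A_i(t)\cap\widehat{P}_{i+1}(t)\bigr)\ge\dim A_i(t-\ell)$ and then close with the upper bound from \cref{lem_main_lemma}; this extra care is harmless and arguably cleaner.
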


\begin{proof}
  By construction, $A_i(t)\cap\widehat{P}_{i+1}(t) \cong A_i(t-\ell)$
  and $A_i(t)+\widehat{P}_{i+1}(t)=A_{i+1}(t)$, and therefore, by Grassmann's formula,
  \begin{equation*}
    \dim A_i(t) = \dim A_i(t-\ell)+\dim A_{i+1}(t)-N(t-\ell).
  \end{equation*}

  \paragraph{\textit{Case 1.}}
  Suppose $\statement{A}_i(t-\ell)$ and $\statement{A}_{i+1}(t)$ are both true and subabundant.  Then
  \begin{dmath*}
    \dim A_i(t) 
    = a_i(t-\ell)+a_{i+1}(t)-N(t-\ell)
    = N(t-\ell)-\nabla^iN(t-\ell)+i \nabla m(t-\ell) \cdot \nabla^{i-1}s(t-2\ell)  + m(t-\ell) \cdot \nabla^is(t-\ell)  + 
    N(t)-\nabla^{i+1}N(t)+(i+1) \nabla m(t) \cdot \nabla^is(t-\ell) + m(t) \cdot \nabla^{i+1}s(t) - N(t-\ell)
    = N(t) - \underbrace{\left( \nabla^iN(t-\ell) + \nabla^{i+1}N(t) \right)}_{(a)} + \underbrace{\left( i \nabla m(t-\ell) \cdot \nabla^{i-1}s(t-2\ell)+(i+1) \nabla m(t) \cdot \nabla^is(t-\ell) \right)}_{(b)} + \underbrace{\left(m(t-\ell) \cdot \nabla^is(t-\ell) + m(t) \cdot \nabla^{i+1}s(t) \right)}_{(c)}
  \end{dmath*}
Now observe that $(a) = \nabla^i N(t-\ell) + \nabla^i N(t) - \nabla^iN(t-\ell) = \nabla^i N(t)$. For the next term we find
\begin{dmath*}
   (b) = i \left( \nabla m(t-\ell) \cdot \nabla^{i-1}s(t-2\ell)  + \nabla m(t) \left( \nabla^{i-1} s(t-\ell) - \nabla^{i-1} s(t-2\ell) \right) \right)
   +\nabla m(t) \cdot \nabla^is(t-\ell)
   = i \nabla m(t) \cdot \nabla^{i-1} s(t-\ell) - i \nabla^2 m(t) \nabla^{i-1} s(t-2\ell) + \nabla m(t) \cdot \nabla^is(t-\ell)
   = i \nabla m(t) \cdot \nabla^{i-1} s(t-\ell) + \nabla m(t) \cdot \nabla^is(t-\ell),
\end{dmath*}
because $m$ is $\ell$-quasilinear, so that $\nabla^2 m(t) = 0$ by \cref{powerrule}. For $(c)$, it suffices to note that
\begin{dmath*}
 (c) = m(t-\ell) \cdot \nabla^i s(t-\ell) + m(t) \left( \nabla^i s(t) - \nabla^i s(t-\ell) \right) 
 = m(t) \cdot \nabla^i s(t) - \nabla m(t) \cdot \nabla^i s(t-\ell).
\end{dmath*}
Putting everything together, we find
  \begin{dmath*}
  \dim A_i(t)
    \hiderel{=} N(t) - \nabla^i N(t) + i \nabla m(t) \cdot \nabla^{i-1}s(t-\ell) + m(t) \cdot \nabla^i s(t)
    = a_i(t).
  \end{dmath*}

\paragraph{\textit{Case 2.}} 
Suppose $\statement{A}_i(t-\ell)$ and $\statement{A}_{i+1}(t)$ are both true and superabundant. Then
  \begin{equation*}
    \dim A_i(t) = N(t-\ell) + N(t) - N(t-\ell)=N(t),
  \end{equation*}
  concluding the proof.
\end{proof}

\begin{lemma}\label{equiabundant}
  $\statement{A}_{K_0}(t)$ is equiabundant for all $t\geq t_0$.
\end{lemma}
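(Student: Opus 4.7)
The statement to establish is the identity $a_{K_0}(t) = N(t)$ for $t \ge t_0$, which is a pure finite-calculus computation using the power rule from \cref{powerrule}. The plan is to evaluate each of the four terms in the formula
\[
 a_{K_0}(t) = N(t) - \nabla^{K_0} N(t) + K_0 \nabla m(t) \cdot \nabla^{K_0-1} s(t-\ell) + m(t) \cdot \nabla^{K_0} s(t)
\]
separately, and observe that the prescription $\LC(s) = \LC(N)/\LC(m)$ is precisely calibrated so that the non-$N(t)$ contributions cancel.

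First I would check that the hypothesis $t \ge t_0 = \ell K_0 + 1$ really does give $K(t) = K_0$, so that $a_{K_0}(t)$ is the relevant quantity to evaluate. Indeed, $t \ge \ell K_0 + 1$ forces $\lceil t/\ell\rceil - 1 \ge K_0$, so by definition $K(t) = K_0$. Next I would apply \cref{powerrule} to each of the three finite-difference expressions appearing in $a_{K_0}(t)$:
\begin{itemize}
\item $\nabla^{K_0} N(t) = \LC(N)\,\ell^{K_0}\,K_0!$, since $\deg N = K_0$;
\item $\nabla^{K_0} s(t) = 0$, since $\deg s = K_0 - 1 < K_0$;
\item $\nabla^{K_0-1} s(t-\ell) = \LC(s)\,\ell^{K_0-1}(K_0-1)!$, since $\deg s = K_0-1$;
\item $\nabla m(t) = \LC(m)\,\ell$, since $\deg m = 1$.
\end{itemize}
Note each of these is a constant independent of $t$, which is why the identity will hold uniformly in $t$.

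Plugging these into the expression for $a_{K_0}(t)$ and using the hypothesis $\LC(s) = \LC(N)/\LC(m)$ would then give
\[
a_{K_0}(t) = N(t) - \LC(N)\ell^{K_0}K_0! + K_0 \cdot \LC(m)\ell \cdot \tfrac{\LC(N)}{\LC(m)}\ell^{K_0-1}(K_0-1)! + 0 = N(t),
\]
because the middle two terms are both equal to $\LC(N)\,\ell^{K_0}\,K_0!$ and cancel. Hence $a_{K_0}(t) = N(t)$ and $\statement{A}_{K_0}(t)$ is equiabundant.

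There is no real obstacle here; the proof is a one-line verification. The only conceptual point worth emphasizing is that this lemma is really the \emph{raison d'\^etre} for the definition of $s$: the degree $K_0 - 1$ and leading coefficient $\LC(N)/\LC(m)$ were chosen precisely so that the top-order finite difference of $N$ exactly balances against the mixed term involving $\nabla m \cdot \nabla^{K_0-1} s$, making $a_{K_0}(t)$ automatically land on the ambient dimension. This equiabundance will then serve as the base case for the downward induction from \cref{induction1} that propagates nondefectivity from $i = K_0$ down to $i = 0$.
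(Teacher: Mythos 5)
Your proposal is correct and follows essentially the same route as the paper's proof: apply \cref{powerrule} to evaluate $\nabla^{K_0}N(t)$, $\nabla m(t)$, $\nabla^{K_0-1}s(t-\ell)$, and $\nabla^{K_0}s(t)$, then substitute and observe the cancellation forced by $\LC(s)=\LC(N)/\LC(m)$. The one small thing you add beyond the paper is the preliminary check that $t\ge t_0$ gives $K(t)=K_0$, which is a reasonable clarification even though the paper leaves it implicit.
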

\begin{proof}
  Using \cref{powerrule},
  \begin{dmath*}
    a_{K_0}(t) = N(t) - \nabla^{K_0}N(t)+ K_0 \nabla m(t) \cdot \nabla^{K_0-1} s(t-\ell) + m(t) \cdot \nabla^{K_0}s(t)
               = N(t)-K_0! \LC(N) \ell^{K_0} + K_0 \LC(m)\ell \cdot \LC(s)(K_0-1)!\ell^{K_0-1}+0
               = N(t)-K_0! \LC(N)\ell^{K_0}+\frac{\LC(N)}{\LC(m)}\LC(m)K_0!\ell^{K_0}
               = N(t).
  \end{dmath*}
This concludes the proof.
\end{proof}

\begin{lemma}\label{induction2}
  If $\statement{A}_{K_0}(t_0)$ is true, then $\statement{A}_{K_0}(t)$ is true for all $t\geq t_0$.
\end{lemma}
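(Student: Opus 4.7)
My plan is to induct on $t \geq t_0$ with step size $\ell$; the base case $t = t_0$ is the hypothesis. For $t > t_0$, by \cref{equiabundant} the statement $\statement{A}_{K_0}(t)$ being true reduces to $A_{K_0}(t) = \kk^{N(t)}$. I would exploit the auxiliary plane $\widehat{P}'(t)$ (conditions~(c)--(d) of \cref{def_bo_lattice}) via two intermediate claims whose conjunction forces $A_{K_0}(t) = \kk^{N(t)}$:
\begin{enumerate}[(i)]
\item $\sum_{j=1}^{K_0}\widehat{P}_j(t) + \widehat{P}'(t) = \kk^{N(t)}$;
\item $\widehat{P}'(t) \subseteq A_{K_0}(t)$.
\end{enumerate}
Indeed, since $\sum_j\widehat{P}_j(t) \subseteq A_{K_0}(t)$, combining (i) and (ii) yields $A_{K_0}(t) \supseteq \sum_j\widehat{P}_j(t) + \widehat{P}'(t) = \kk^{N(t)}$.

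For (i), I would apply \cref{lem_bd_vector_spaces} twice --- once to the ambient $(N,\ell)$-lattice of condition~(a), giving $\dim\sum_j\widehat{P}_j(t) = N(t) - \nabla^{K_0}N(t)$, and once to the sub-lattice inside $\widehat{P}'(t)$ from condition~(c), giving $\dim\sum_j(\widehat{P}'(t)\cap\widehat{P}_j(t)) = N(t_0) - \nabla^{K_0}N(t_0)$. Distributivity gives $\widehat{P}'(t)\cap\sum_j\widehat{P}_j(t) = \sum_j(\widehat{P}'(t)\cap\widehat{P}_j(t))$, and \cref{powerrule} makes $\nabla^{K_0}N$ constant, so $\nabla^{K_0}N(t) = \nabla^{K_0}N(t_0)$. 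Grassmann's formula then yields $\dim(\sum_j\widehat{P}_j(t) + \widehat{P}'(t)) = N(t)$.

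For (ii), intersecting each summand of $A_{K_0}(t)$ with $\widehat{P}'(t)$ and observing that for every $p \in Z(t) \subseteq X(t)\cap P'(t)$ we have $T_p\widehat{X}(t)\cap\widehat{P}'(t) \supseteq T_p\widehat{X}(t_0)$ (using the isomorphism $X(t)\cap P'(t) \cong X(t_0)$ from condition~(d) with $I = \emptyset$), $A_{K_0}(t)\cap\widehat{P}'(t)$ contains an $A_{K_0}$-type configuration $B$ inside $\widehat{P}'(t) \cong \kk^{N(t_0)}$ for the variety $X(t_0)$ and the sub-lattice from~(c), with $s(t)$ points distributed as $\nabla^{K_0-|I|}s(t-|I|\ell)$ per cell $I \subseteq \{1,\ldots,K_0\}$. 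Structurally, $B$ is the $A_{K_0}$-configuration at $t_0$ for the shifted quasipolynomial $\tilde{s}(\tau) := s(\tau + (t-t_0))$, which has the same degree and leading coefficient as $s$. Assuming the cell-count inequalities $\nabla^{K_0-|I|}s(t-|I|\ell) \geq \nabla^{K_0-|I|}s(t_0-|I|\ell)$ (trivial for $|I| \leq 1$ by \cref{powerrule}, and derived from positivity of $\LC(s) = \LC(N)/\LC(m)$ for $|I| \geq 2$), the $B$-configuration generically contains an $A_{K_0}(t_0)$-configuration as a sub-configuration; semicontinuity of span dimensions together with the hypothesis $\statement{A}_{K_0}(t_0)$ then gives $\dim B \geq N(t_0) = \dim\widehat{P}'(t)$, whence $B = \widehat{P}'(t)$, proving (ii).

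The principal obstacle will be formalizing the transfer step in~(ii): deducing that nondefectivity at $t_0$ implies nondefectivity of the enlarged/shifted configuration $B$ in $\widehat{P}'(t)$. The monotonicity of cell counts is immediate for $|I| \leq 2$ (since $\nabla^{K_0-1}s$ is a positive constant by \cref{powerrule}), but requires a more delicate argument for $|I| \geq 3$, where $\nabla^{K_0-|I|+1}s$ is a higher-degree quasipolynomial; one expects a case analysis, or an alternative induction comparing $t$ to $t-\ell$ (rather than to $t_0$) via the positive telescoping difference $\nabla^{K_0-|I|+1}s(t-|I|\ell)$, to handle this step cleanly.
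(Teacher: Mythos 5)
Your high-level decomposition into (i) $\sum_j\widehat{P}_j(t)+\widehat{P}'(t)=\kk^{N(t)}$ and (ii) $\widehat{P}'(t)\subseteq A_{K_0}(t)$ is sound and mirrors the paper's Grassmann-formula computation, and your argument for (i) is correct. However, your treatment of (ii) introduces a monotonicity requirement on cell counts that you rightly flag as a serious obstacle --- and it is indeed one: for $|I|\geq 2$ the function $\nabla^{K_0-|I|}s$ is an $\ell$-quasipolynomial of degree $|I|-1\geq 1$, and $\nabla^{K_0-|I|}s(t-|I|\ell)\geq\nabla^{K_0-|I|}s(t_0-|I|\ell)$ fails in general because $t$ and $t_0$ need not lie in the same residue class modulo $\ell$ and the per-class polynomials differ by constants that can overwhelm the gain from a small increase $t-t_0$. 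Positivity of $\LC(s)$ only gives eventual monotonicity within a single residue class, which is not what you need.

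The missing idea, which the paper uses to make ``$A_{K_0}(t)\cap\widehat{P}'(t)\cong A_{K_0}(t_0)$'' literally true, is the same absorption argument that opens the proof of \cref{lem_main_lemma}: for $i=K_0$ and any point $p$ lying in two or more $P_j(t)$'s, the tangent space $T_p\widehat{X}(t)$ is already contained in $\widehat{P}_j(t)+\widehat{P}_k(t)\subseteq\sum_j\widehat{P}_j(t)$, so the cells with $|I|\geq 2$ contribute nothing to $A_{K_0}(t)$. Combined with $\nabla^{K_0}s=0$ (emptying the $|I|=0$ cell) and the constancy of $\nabla^{K_0-1}s$ by \cref{powerrule} (so the $|I|=1$ cells have the \emph{same} count for $t$ and $t_0$), the configuration cut out on $\widehat{P}'(t)$ is, summand by summand, exactly an $A_{K_0}(t_0)$-configuration --- no shifted quasipolynomial $\tilde{s}$, no comparison of higher cell counts, and therefore no monotonicity to prove. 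Once this is in hand, (ii) follows immediately from the hypothesis and \cref{equiabundant}, and your (i)+(ii) combination finishes the proof. The paper packages this as one Grassmann computation ($\dim A_{K_0}(t)=\dim(A_{K_0}(t)+\widehat{P}'(t))+\dim(A_{K_0}(t)\cap\widehat{P}'(t))-N(t_0)$), with an additional absorption step showing $A_{K_0}(t)+\widehat{P}'(t)=\sum_j\widehat{P}_j(t)+\widehat{P}'(t)$, but your inclusion-based phrasing is equivalent and arguably cleaner once (ii) is established correctly.
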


\begin{proof}
  By construction, $A_{K_0}(t)\cap\widehat{P}'(t) \cong A_{K_0}(t_0)$,
  and therefore,
  \begin{align*}
    \dim A_{K_0}(t) &= \dim(A_{K_0}(t)+\widehat{P}'(t))+\dim A_{K_0}(t_0)-N(t_0)\\
                    &= \dim(A_{K_0}(t)+\widehat{P}'(t))\\
                    &= \dim\left(\sum_{j=1}^{K_0}\left(\widehat{P}_j(t)+\sum_{p\in Z_{K_0,j}(t)} T_p \widehat{X}(t) \right)+\sum_{p\in Z_{K_0}(t)}T_p \widehat{X}(t) + \widehat{P}'(t)\right).
  \end{align*}

  Note that $Z_{K_0}(t)=\emptyset$ since $\nabla^{K_0}s(t)=0$ by \cref{powerrule}.
  By construction, if $[p]\in Z_{K_0,j}(t)$, then $T_p \widehat{X}(t)$ intersects $\widehat{P}_j(t)$ in dimension $m(t-\ell)$, $\widehat{P}'(t)$ in dimension $m(t_0)$, and $\widehat{P}_j(t) \cap \widehat{P}'(t)$ in dimension $m(t_0-\ell)$. Therefore, $T_p \widehat{X}(t)$ intersects $\widehat{P}_j(t) + \widehat{P}'(t)$ in dimension 
  \[
  m(t-\ell)+m(t_0)-m(t_0-\ell)=m(t)-\nabla m(t)+\nabla m(t_0)= m(t),
  \]
  where in the last equality we used that $m$ is $\ell$-quasilinear, so $\nabla m(t)=\nabla m(t_0)$ by \cref{powerrule}. Since $m(t)$ is the dimension of $T_p \widehat{X}(t)$, we conclude that $T_p \widehat{X}(t)\subset\widehat{P}_j(t)+\widehat{P}'(t)$. Putting everything together, we obtain
  \begin{align*}
    \dim A_{K_0}(t) &= \dim\left(\sum_{j=1}^{K_0}\widehat{P}_j(t)+\widehat{P}'(t)\right)\\
                    &= N(t)-\nabla^{K_0}N(t)+N(t_0) - (N(t_0)-\nabla^{K_0}N(t_0))\\
                    &= N(t) -\nabla^{K_0}N(t) + \nabla^{K_0}N(t_0),
  \end{align*}
  where the second step is by \cref{def_bo_lattice}.
  Finally, we recall that the degree of $N$ is $K_0$ by definition and \cref{powerrule} entails that $\nabla^{K_0}N(t)=\nabla^{K_0}N(t_0)$.  Therefore, $\dim A_{K_0}(t)=N(t)$, the expected dimension by \cref{equiabundant}.
\end{proof}

Now we are ready to combine the foregoing lemmata into the main theorem.

\begin{theorem}\label{base cases}
  If $\statement{A}_{K(t),s}(t)$ is true and subabundant (respectively superabundant) for all $t\leq t_0$, then $\statement{A}_{i,s}(t)$ is true and subabundant (respectively superabundant) for all $t\in\NN$ and $i\in\{0,\ldots,K(t)\}$.  In particular, $\sigma_{s(t)}(X(t))$ is nondefective for all $t$.
\end{theorem}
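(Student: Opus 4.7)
The plan is to combine the three preceding lemmata via a double induction on the pair $(t,i)$. First, I would settle the anchor $\statement{A}_{K(t)}(t)$ for each $t$ separately. If $t \le t_0$, it is supplied directly by the hypothesis of the theorem with the prescribed abundance type. If $t > t_0$, then $K(t) = K_0$ (since $t > t_0 = \ell K_0 + 1$ forces $\lceil t/\ell\rceil - 1 \ge K_0$); here I would apply \cref{induction2} with the base $\statement{A}_{K_0}(t_0)$ from the hypothesis to obtain truthfulness, and then \cref{equiabundant} to conclude that $a_{K_0}(t) = N(t)$, i.e., $\statement{A}_{K_0}(t)$ is equiabundant. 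Since equiabundance is compatible with both subabundance and superabundance, this anchor cohabits harmoniously with either alternative of the theorem.

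Next, the main argument is a strong outer induction on $t \in \NN$ together with, for each $t$, an inner downward induction on $i$ from $K(t)$ down to $0$. The outer base cases $t \le \ell$ force $K(t) = 0$, so the anchor is already the full conclusion. For the inner step with $0 \le i < K(t)$, I would invoke \cref{induction1}: its hypothesis $\statement{A}_{i+1}(t)$ is provided by the inner induction, while $\statement{A}_i(t-\ell)$ is provided by the outer induction hypothesis. Crucially, $i < K(t)$ forces $K(t) \ge 1$ and hence $t > \ell$, and the simple observation $K(t-\ell) \ge K(t) - 1 \ge i$ places $i$ within the admissible range of $\statement{A}_\bullet(t-\ell)$. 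The lemma then transports the common abundance type to $\statement{A}_i(t)$, closing the induction.

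Specializing the conclusion to $i = 0$ yields $\statement{A}_0(t)$ true, i.e., $\dim A_0(t) = \min\{a_0(t), N(t)\} = \min\{m(t) s(t), N(t)\}$. Combining with \cref{eqn_lower_bound} gives
\[
  \min\{m(t) s(t), N(t)\} = \dim A_0(t) \le \dim \sigma_{s(t)}(X(t)) + 1 \le \expdim \sigma_{s(t)}(X(t)) + 1 = \min\{m(t) s(t), N(t)\},
\]
forcing equality throughout, and hence nondefectivity of $\sigma_{s(t)}(X(t))$.

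I expect the main obstacle to be organizational rather than substantive: one must verify that the bookkeeping of $K(t)$ under the shift $t \mapsto t-\ell$ (in particular the inequality $K(t-\ell) \ge K(t) - 1$) keeps every appeal to \cref{induction1} legitimate, and that the equiabundance produced automatically at $i = K_0$ for $t \ge t_0$ is simultaneously treated as subabundant or superabundant to match the tag carried by the small-$t$ base cases. All genuine computation has already been isolated in the preceding three lemmata, so no further calculation is required beyond these reconciliations.
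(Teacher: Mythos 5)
Your proposal is correct and follows exactly the paper's argument: use Lemma \ref{induction2} (together with Lemma \ref{equiabundant}) to extend the anchor $\statement{A}_{K_0}$ from $t_0$ to all $t \ge t_0$, then run a double induction on $(t,i)$ via Lemma \ref{induction1}, and finally specialize to $i=0$ and invoke \cref{eqn_lower_bound}. The only addition is that you have spelled out the bookkeeping (the inequality $K(t-\ell)\ge K(t)-1$ and the fact that equiabundance is compatible with either abundance tag) that the paper leaves as ``follows immediately,'' which is a welcome clarification but not a different approach.
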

\begin{proof}
  Since $\statement{A}_{K_0,s}(t_0)$ is true by assumption, $\statement{A}_{K_0,s}(t)$ is also true for $t \ge t_0$ by \cref{induction2}. These together with the other assumptions form the base cases.
  The remaining cases follow immediately using induction on $i$ and $t$ with step size $\ell$ by \cref{induction1}.
  Under the assumptions of the theorem, $\statement{A}_0(t)$ is true, so it follows from \cref{eqn_lower_bound} that $\sigma_{s(t)} X(t)$ is nondefective for all $t \in \NN$.
\end{proof}

\begin{corollary}\label{cor_nondefective}
  Suppose $s_2(t)=\left\lceil\frac{N(t)}{m(t)}\right\rceil$ is $\ell$-quasipolynomial.  Then so is $s_1(t)=s_2(t)-1$.  If $\statement{A}_{K(t),s_1}(t)$ is true and subabundant and $\statement{A}_{K(t),s_2}(t)$ is true and superabundant for all $t\leq t_0$, then $\sigma_s(X(t))$ is nondefective for all $s,t\in\NN$.
\end{corollary}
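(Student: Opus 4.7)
The plan is to apply \cref{base cases} separately to $s_1$ and $s_2$, and then to bootstrap nondefectivity from these two specific values to every positive integer $s$ by a standard propagation argument.

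First I would verify that $s_1 = s_2 - 1$ satisfies the structural hypotheses imposed on the function $s$ in \cref{BO lattices}, namely that $s_1$ is $\ell$-quasipolynomial of degree $K_0 - 1$ with leading coefficient $\LC(N)/\LC(m)$. Subtracting a constant preserves all of these properties, so it suffices to check them for $s_2 = \lceil N(t)/m(t) \rceil$. Since $N$ has degree $K_0$, $m$ is $\ell$-quasilinear, and $s_2$ is assumed $\ell$-quasipolynomial, comparing asymptotics forces $\deg s_2 = K_0 - 1$ and $\LC(s_2) = \LC(N)/\LC(m)$; the ceiling can only affect the constant term on each residue class. With this verified, \cref{base cases} applied to $s_1$ (in the subabundant regime) and to $s_2$ (in the superabundant regime) yields $\dim \sigma_{s_1(t)}(X(t)) = s_1(t) m(t) - 1$ and $\sigma_{s_2(t)}(X(t)) = \PP^{N(t) - 1}$ for every $t \in \NN$.

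To conclude for arbitrary $s \in \NN$, observe that $s_2(t) = s_1(t) + 1$, so every positive integer $s$ is either at most $s_1(t)$ or at least $s_2(t)$. In the latter range, $\sigma_s(X(t)) \supseteq \sigma_{s_2(t)}(X(t)) = \PP^{N(t)-1}$ fills the ambient space immediately. For $s \leq s_1(t)$, I would invoke the standard fact that if $\sigma_{s'}(X(t))$ attains its expected subabundant dimension $s' m(t) - 1$, then so does $\sigma_{s'-1}(X(t))$; this follows from the join inequality $\dim \sigma_{s'}(X(t)) \leq \dim \sigma_{s'-1}(X(t)) + m(t)$ together with the upper bound in \cref{def_expdim}. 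Iterating this observation from $s' = s_1(t)$ down to $s' = s+1$ gives nondefectivity of $\sigma_s(X(t))$. The only mildly delicate step is the identification $\LC(s_2) = \LC(N)/\LC(m)$; the remainder is essentially bookkeeping.
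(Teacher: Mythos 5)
Your proposal is correct and follows essentially the same route as the paper: apply \cref{base cases} to $s_1$ and $s_2$, then propagate to all $s$ via the observation that a generic sum of tangent spaces attaining maximal dimension forces every subcollection to do the same (equivalently, the join inequality you cite). The paper phrases the propagation more informally as ``fewer summands'' in Terracini's lemma, but the content is identical; your explicit check that $s_2$ (and hence $s_1$) has degree $K_0-1$ and leading coefficient $\LC(N)/\LC(m)$ is a worthwhile detail the paper leaves implicit.
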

\begin{proof}
  The assumptions entail that $\sigma_{s_1(t)}(X(t))$ and $\sigma_{s_2(t)}(X(t))$ are both nondefective due to \cref{base cases}. 
  If $s<s_1(t)$, then there are fewer summands when computing the dimension of $\sigma_s(X(t))$ using Terracini's Lemma (\cref{terracini}) than there are for $\sigma_{s_1(t)}(X(t))$. As the latter has the expected dimension, so must the former. A similar argument holds for $s > s_2(t)$.
\end{proof}

Equivalently we can ask for $s_1(t) = \left\lfloor \frac{N(t)}{m(t)} \right\rfloor$ to be $\ell$-quasilinear with $s_2(t) = s_1(t)+1$ in the previous result.

\subsection{Examples}
The foregoing technique generalizes several instances that have appeared in the literature. We review some of them next.

\begin{example}[Secant varieties of third Veronese varieties \cite{BO}]
  Brambilla and Ottaviani's original result was presented in the different but, thanks to inverse systems (see \cite{iarrobino}), equivalent language of determining the number of double points that impose independent conditions on cubics.

  Let $X(t)$ be the third veronese embedding of $\PP^{t+4}$, i.e., $X(t) = v_3(\PP^{t+4})$, so that $N(t)=\binom{t+7}{3}$ and $m(t)=t+5$. Let $\ell=K_0=3$ and $t_0=10$. We begin with senary (six variable) cubics, as the quinary case is well known to be defective.
  In this example, $N(t)/m(t)=\frac{1}{6}(t+7)(t+6)$. If $t\equiv 0,2\pmod 3$, this is always an integer, and for $t\equiv 1\pmod 3$, this is always exactly $\frac{2}{3}$ less than an integer.  Therefore, $s_2(t) = \left\lceil \frac{N(t)}{m(t)} \right\rceil$ is $3$-quasiquadratic.

  Note that $X(t)\subset\PP(S^3V(t))$ for some $(t+5)$-dimensional vector space $V(t)$. For each $t > \ell = 3$ and $j\in\{1,\ldots, K(t)\}$, choose a generic $(t+2)$-dimensional subspace $U_j(t)$ of $V(t)$, forming an $(N,3)$-lattice as in \cref{ex_lattice_fixed_deg}, and let $P_j(t)=\PP(S^3U_j(t))$.  For each $t>10$, let $U'(t)$ be a 15-dimensional subspace of $V(t)$ and define $P'(t)=\PP(S^3U'(t))$.  These form our Brambilla--Ottaviani lattice.


  We can use a computer to construct and find the dimensions of the vector spaces $A_{K(t)}(t)$ with $t\leq 10$ to verify the base cases needed for \cref{cor_nondefective}.
  It follows that $\sigma_s(v_3(\PP^n))$ is nondefective for all $n\geq 5$.
\end{example}

\begin{example}[Secant varieties of tangential varieties to third Veronese varieties \cite{AV}] \
  Let $X(t)$ be the tangential variety of the third veronese embedding of $\PP^{t+7}$, i.e., $X(t) =\tau(v_3(\PP^{t+7}))$. Then, $N(t)=\binom{t+10}{3}$ and $m(t)=2t+15$. Let $\ell=24$, $K_0=3$, and $t_0=73$.  In this case, $s_2(t) = \left\lceil \frac{N(t)}{m(t)} \right\rceil$ is $24$-quasiquadratic. The Brambilla--Ottaviani lattice is formed similarly as in the previous example.


  The base cases needed for \cref{cor_nondefective} may be computed using software, although due to the step size required by the induction, this is a considerably more difficult task than the previous example, requiring computations in a space of dimension $88\,560$. It nevertheless follows from the computations described in \cite{AV} that $\sigma_s(\tau(v_3(\PP^n)))$ is nondefective for all $n\geq 8$. Note that they started the induction only at $n=8$ to avoid some numerical issues that arose when starting with smaller $n$.
  The results for $n\leq 7$ were previously known \cite{BCGI}, hence completing the proof.
\end{example}

\begin{example}[Secant varieties of Segre-Grassmann varieties \cite{wan}]
  Let $X(t)$ be the Segre embedding of $\PP^t$ and the (Pl\"ucker embedding of the) Grassmannian of $1$-dimensional subspaces of $\kk^{t+3}$, i.e.,
  $X(t)=\Seg(\PP^t\times\GG(1, t+2))$, so $N(t)=(t+1)\binom{t+2}{2}$ and $m(t)=3t+5$. Let $\ell=6$, $K_0=3$, and $t_0=19$. In this example, $s_2(t) = \left\lceil \frac{N(t)}{m(t)} \right\rceil$ is 6-quasiquadratic.

  Note that $X(t)\subset\PP\left(V_1(t)\otimes \bigwedge^2 V_2(t)\right)$ for some $(t+1)$-dimensional vector space $V_1(t)$ and $(t+3)$-dimensional vector space $V_2(t)$. For every $t > \ell = 6$ and $j\in\{1,\ldots, K(t)\}$, choose a generic $(t-5)$-dimensional subspace $U_{1,j}(t)$ of $V_1(t)$, a generic $(t-3)$-dimensional subspace $U_{2,j}(t)$ of $V_2(t)$, and take $P_j(t)=\PP\left(U_{1,j}(t)\otimes\bigwedge^2 U_{2,j}(t)\right)$. For every $t > t_0 = 19$, let $U_1'(t)$ be some 20-dimensional subspace of $V_1(t)$, $U_2'(t)$ a 22-dimensional subspace of $V_2(t)$, and define $P'(t)=\PP\left(U_1'(t)\otimes\bigwedge^2U_2'(t)\right)$. These form the Brambilla--Ottaviani lattice used in \cite{wan}.


  The base cases needed for \cref{cor_nondefective} can be verified by computer, in this way proving that $\sigma_s(\Seg(\PP^n\times\GG(1,n+2)))$ is nondefective for all $n\geq 1$.

  Similar constructions were used by Wan \cite{wan} to prove the nondefectivity of $\sigma_s(\Seg(\PP^n\times\GG(1,n-4)$ for $n=5$, $n=6$, and $n\geq 10$ and of $\sigma_s(\Seg(\PP^n\times\GG(1,n-1)))$ for $n=2$ and $n\geq 6$.
\end{example}

\section{Chow varieties}
\label{chow BO lattices}

\subsection{Definitions}

Let $\kk$ be an algebraically closed field of characteristic 0 and suppose $V$ is an $(n+1)$-dimensional vector space over $\kk$.  Then the $d$th symmetric power $S^dV$ is the $\binom{n+d}{d}$-dimensional vector space of $(n+1)$-ary $d$-ics, i.e., homogeneous polynomials of degree $d$ in $n+1$ variables with coefficients in $\kk$.

\begin{definition}
  The \textit{Chow variety} (also known as the \textit{split variety} or \textit{variety of completely decomposable or reducible forms}) of $(n+1)$-ary $d$-ics is the projective variety
  \begin{equation*}
   \mathcal{C}_{d,n} = \Split_d(\PP^n) = \{[\ell_1\cdots\ell_d]:\ell_i\in V\}
  \end{equation*}
  in $\PP V=\PP^{\binom{n+d}{d}-1}$.
\end{definition}

By the product rule, at $[p]=[\ell_1\cdots\ell_d]$,
\begin{equation}\label{eqn_product_rule_tangent_space}
  T_p \widehat{\mathcal{C}}_{d,n}=\sum_{i=1}^d\ell_1\cdots\ell_{i-1}\ell_{i+1}\cdots\ell_dV.
\end{equation}
If $p$, and thus the $\ell_i$, are generic, then the pairwise intersection of the summands in \cref{eqn_product_rule_tangent_space} is the line spanned by $p$.  Since each summand has dimension $n+1$, we may inductively apply Grassman's formula to see that $\dim T_p\widehat{\mathcal{C}}_{d,n}=dn + 1$.  It follows that $\dim\mathcal{C}_{d,n}=dn$, and consequently, from \cref{def_expdim}, we have
\begin{equation*}
  \expdim\sigma_s(\mathcal{C}_{d,n})=\min\left\{s(dn+1),\binom{n+d}{d}\right\}-1.
\end{equation*}

Several of the known results concerning dimensions of secant varieties of Chow varieties have arisen via Brambilla--Ottaviani lattices.  In fact, we may use them to perform induction on both dimension ($n$) and degree ($d$). Since $\binom{n+d}{d}$ and $dn+1$ are both symmetric in these variables, these processes
are very similar.

\subsection{Induction on degree with fixed dimension} \label{sec_ind_degree_chow}

Fixing $n$, we may let $X(t)=\mathcal{C}_{t+\alpha,n}$ (where $d=\alpha+1$ is the starting point of our induction)$, N(t)=\binom{n + t+\alpha}{t+\alpha}$, $m(t)=(t+\alpha)n+1$, and $K_0=n$.  The values of $\ell$ (and thus $t_0$) are generally chosen to be large enough so that $s(t)\approx \frac{N(t)}{m(t)}$ for all $t$ but also small enough so that $N(t_0)$ (the dimension of the largest vector space in which we will doing computations) is not unmanageably large.

Note that $X(t)\subset\PP(S^{t+\alpha}V)$ for some $(n+1)$-dimensional vector space $V$.  For each $t>\ell$ and $j\in\{1,\ldots,K(t)\}$, choose $\ell$ generic linear forms $g_{j,1},\ldots,g_{j,\ell}\in V$ and let $P_j(t)=\PP(g_{j,1}\cdots g_{j,\ell}S^{t+\alpha-\ell} V)$ as in \cref{ex_lattice_fixed_dim}. For each $t>t_0$, choose $t-t_0$ generic linear forms $h_1,\ldots,h_{t-t_0}\in V$ and define $P'(t)=\PP(h_1\cdots h_{t-(t_0+\alpha)}S^{t_0+\alpha}V)$. These form our Brambilla--Ottaviani lattice.

\subsubsection{Ternary forms}

In \cite{abo}, Abo considered the $n=2$ case using $\alpha=5$, $\ell=4$, and $t_0=9$.  In this case,
\begin{equation*}
  \left\lceil\frac{N(t)}{m(t)}\right\rceil=
  \frac{1}{4}\cdot\begin{cases}
    t + 8 & \text{if }t\equiv 0\pmod 4\\
    t + 11 & \text{if }t\equiv 1\pmod 4\\
    t + 10 & \text{if }t\equiv 2\pmod 4\\
    t + 9 & \text{if }t\equiv 3\pmod 4
  \end{cases}
\end{equation*}

This is 4-quasilinear, and after verifying the base cases from Corollary \ref{cor_nondefective}, we see that $\sigma_s(\mathcal{C}_{d,2})$ is nondefective for all $d\geq 6$.  As Shin had already proven the result for $d\leq 5$ in \cite{shin1}, this completes the proof of Conjecture \ref{conjecture} for $n=2$.

\subsubsection{Quaternary forms}\label{old n=3 case}

Abo also considered the $n=3$ case using $\alpha=0$, $\ell=6$, and $t_0=19$ with
a pair of 6-quasiquadratic functions $s_1$ and $s_2$ satisfying $s_1(t)\leq \frac{N(t)}{m(t)}\leq s_2(t)$ for all $t$. However, $s_2(t)>\left\lceil\frac{N(t)}{m(t)}\right\rceil$ for $t>6$ and $s_2-s_1$ is 6-quasilinear. So they are not sufficient for completing the $n=3$ case using Corollary \ref{cor_nondefective}.  However, checking the base cases needed for Theorem \ref{base cases} does show that $\sigma_s(\mathcal{C}_{t,3})$ is nondefective for all $s\leq s_1(t)$ and $s\geq s_2(t)$, leaving gap of unknown cases near $\frac{N(t)}{m(t)}$ which widens as $t$ grows.

In his Ph.D. thesis \cite{thesis}, the first author improved on this slightly using $\alpha=0$, $\ell=9$, and $t_0=28$ with a pair of 9-quasiquadratic functions $s'_1$ and $s'_2$ that satisfy $s'_1(t)\geq s_1(t)$ for all $t$ and $s'_2(t)<s_2(t)$ for all $t\not\equiv 0\pmod 3$. This closes the gap slightly for completing the $n=3$ case of Conjecture \ref{conjecture}, but a gap still remains.

\subsection{Induction on dimension with fixed degree} \label{sec_ind_dimension_chow}
On the other hand, if we fix $d$, we may let $X(t)=\mathcal{C}_{d,t+\alpha}$ (where $n=\alpha+1$ is the starting point of our induction)$, N(t)=\binom{t+\alpha+d}{d}$, $m(t)=d(t+\alpha)+1$, $K_0=d$, and values of $\ell$ and $t_0$ are chosen depending on the specific problem.

Note that $X(t)\subset\PP(S^dV(t))$ for some $(t+\alpha+1)$-dimensional vector space $V(t)$.  For each $t>\ell$ and $j\in\{1,\ldots,K(t)$, choose a generic $(t-\ell+1)$-dimensional vector subspace $U_j(t)$ of $V(t)$ and let $P_j(t)=\PP(S^d U_j(t))$.  For each $t>t_0$, let $U'(t)$ be an $(t_0+\alpha+1)$-dimensional subspace of $V(t)$ and define $P'(t)=\PP(S^dU'(t))$. These form our Brambilla--Ottaviani lattice.

\subsubsection{Cubics}\label{old d=3 case}

In \cite{abo}, Abo considered the $d=3$ case using $\alpha=0$, $\ell=6$, and $t_0=19$ using the same 6-quasiquadratic functions $s_1$ and $s_2$ as the $n=3$ case, thanks to the symmetry of $\binom{n+d}{d}$ and $nd+1$ in $n$ and $d$. Checking the base cases from Theorem \ref{base cases} proves that $\sigma_s(\mathcal{C}_{3,n})$ is nondefective for all $s\leq s_1(t)$ and $s\geq s_2(t)$.

The functions $s'_1$ and $s'_2$ from \cite{thesis} could theoretically be used
to close the gap somewhat as in the $n=3$ case, but the first author did not
have access to the computing power necessary to verify the necessary base cases.


\section{Proof of \texorpdfstring{\cref{main_result}}{Theorem \ref{main_result}}} \label{verification}

To prove \cref{main_result} for an algebraically closed field $\kk$ of characteristic $0$ that contains $\NN$, we select the $27$-quasiquadratic functions that the first author identified in \cite{thesis}, namely
\begin{equation} \label{eqn_the_esses}
\begin{aligned}
  s_1(t) &= \frac{1}{18}t^2 + \frac{17}{54}t + \frac{a(t)}{27}\\
  s_2(t) &= s_1(t) + 1,
\end{aligned}
\end{equation}
where $a(t)$ depends on the remainder of $t$ when divided by 27 as given in
Table \ref{values of a}.

\begin{table}[ht]
\centering
\begin{equation*}
\begin{array}{|r|r||r|r||r|r|}
\hline
r & a(27q+r) & r & a(27q+r) & r & a(27q+r) \\\hline
0 & 0   & 9  & -9  & 18 & 9 \\
1 & -10 & 10 & 8   & 19 & -1 \\
2 & 4   & 11 & -5  & 20 & 13 \\
3 & -12 & 12 & 6   & 21 & -3 \\
4 & -4  & 13 & -13 & 22 & 5 \\
5 & 1   & 14 & -8  & 23 & 10 \\
6 & 3   & 15 & -6  & 24 & 12 \\
7 & 2   & 16 & -7  & 25 & 11 \\
8 & -2  & 17 & -11 & 26 & 7 \\
\hline
\end{array}
\end{equation*}
\caption{Values of $a(t)$\label{values of a}}
\end{table}

In particular, we note that $s_2(t)=\left\lceil (3t + 1)^{-1}\binom{t+3}{3}\right\rceil$
for all $t\in\NN$. Consequently, if the required base cases can be verified, i.e., if the  statements $\statement{A}_{K,s_1}(t)$ and $\statement{A}_{K,s_2}(t)$ are true for all $t \le t_0 = 82$, then $\sigma_s(\mathcal{C}_{3,n})$ and $\sigma_s(\mathcal{C}_{d,3})$ can be shown to be nondefective for all $n$, $d$, and $s$ by \cref{cor_nondefective}.
However, since in this case $t_0=27\cdot 3 + 1 = 82$, this requires challenging computations in vector spaces of dimensions up to $\binom{82 + 3}{3}=98\,770$, which we were unable to perform using standard software. Therefore, we describe our approach for proving them in some detail. In the remainder of this section, we let the vector space $V_t := V(t)$ for brevity.

In the literature \cite{AOP2009,BO,CO2012,abo,BCO2014,COV2014,AV}, a standard approach for proving the base cases of a Brambilla--Ottaviani lattice induction has emerged. It goes as follows. For proving that $\statement{A}_{K,s_b}(t)$ is true, where $\statement{A}_{i,s}(t)$ is defined as below \cref{defn_expdim} and $b=1,2$, a matrix $\mx{T}_{i,b}$ whose column span coincides with $A_{i,s_b}(t)$ is constructed. If the statement $\statement{A}_{K,s_b}(t)$ is true, then there is a Zariski-open subset of points in the $s_b(t)$-fold product of $X(t)$ such that 
\(
\dim A_{i,s_b}(t) = \operatorname{expdim} A_{i,s_b}(t).
\)
However, if the statement is false, then there is no such set of points. Consequently, it suffices to find one configuration of $s_b(t)$ points on $X(t)$ for which the corresponding space $A_{i,s_b}(t)$ has the expected dimension. These points should be chosen as beneficially as possible. For example, if we can choose points such that the matrix $\mx{T}_{i,b}$ has entries over the natural numbers, then we can bound its rank over $\kk$ from below by choosing a small prime number $P$ and computing the rank of $\mx{T}_{i,b}$ over the finite field $\ZZ_P$. This rank is computed efficiently by reducing $\mx{T}_{i,b}$ to row-echelon form using Gaussian elimination in $\ZZ_P$. If the rank of $\mx{T}_{i,b}$ over $\ZZ_P$ equals $\operatorname{expdim} A_{i,s_b}(T)$, then this implies that $\statement{A}_{i,s_b}(t)$ is true.

To construct aforementioned matrices $\mx{T}_{i,b}$, we choose coordinates on the ambient space $S^d V$ of the affine cone of $\mathcal{C}_{d,n}$ as follows. Take the coordinates $\{ x_0, \ldots, x_n \}$ on $V$. Then, a standard basis of $S^d V$ is the monomial basis 
\[
E_{n,d} = \{ x_{i_1} x_{i_2} \cdots x_{i_d} \mid 0 \le i_1 \le i_2 \le \cdots \le i_d \le n \}.
\]
Let $1 \le z_{i_1,\ldots,i_d} \le \binom{n+d}{d}$ denote the position of monomial $x_{i_1} \cdots x_{i_d} \in E_{n,d}$ in the lex-ordering of $E_{n,d}$, i.e., $x_{0}^d, x_{0}^{d-1} x_{1}, \ldots, x_{n}^d$. We have an isomorphism between $S^d V$ and $\kk^{\binom{n+d}{d}}$ via the bijection 
\[
 \nu_d : E_{n,d} \to \kk^{\binom{n+d}{d}}, \; x_{i_1} \cdots x_{i_d} \mapsto e_{z_{i_1,\ldots,i_d}}
\]
between the basis vectors $E_{n,d}$ of $S^d V$ and the standard basis $\{e_i\}$ on $\kk^{\binom{n+d}{d}}$ where $e_i$ has $1$ in position $i$ and zeros elsewhere. In this way, the product of $d$ linear forms 
\[
 \ell_i = \ell_{i,0} x_0 + \ell_{i,1} x_1 + \cdots + \ell_{i,n} x_n \in V
\]
would be represented practically in $\kk^{\binom{n+d}{d}}$ as 
\[
 \nu_d \left( \ell_1 \cdots \ell_d \right) = 
 \left(\prod_{k=1}^d \ell_{k,0}, \sum_{k=1}^d \ell_{k,1} \prod_{j\ne k} \ell_{j,0}, \ldots, \sum_{k=1}^d \ell_{k,n} \prod_{j\ne k} \ell_{j,n-1}, \prod_{k=1}^d \ell_{k,n}  \right).
\]

In the following subsections, it is helpful to keep in mind that the linear space $A_{i,s_b}(t)$ can be expressed as in \cref{eqn_simple_A}. This implies that it suffices to pick 
\begin{enumerate}[(i)]
 \item $\nabla^{i-1} s_b(t - \ell)$ points generically on $\widehat{X}(t) \cap \widehat{P}_k(t)$ for $k=1,\ldots,i$, and
 \item $\nabla^i s_b(t)$ points generically on $\widehat{X}(t)$
\end{enumerate}
to verify the truth of $\statement{A}_{K,s_b}(t)$.

In the next subsections, we fill in the details of this approach for induction on degree for $(n+1)$-ary forms and induction on dimension for cubics.

\subsection{An algorithm for induction on degree}
The base cases of the Brambilla--Ottaviani lattice induction for $(n+1)$-ary decomposable forms are proved as suggested in \cite[Remark 3.6]{abo} for $\alpha=0$. For $j = 1, \ldots, K(t)$, we consider the following linear subspaces $\widehat{P}_j(t)$ of $S^t V$, where $V$ is an $(n+1)$-dimensional vector space. As explained in \cref{sec_ind_degree_chow}, we first choose $\ell$ fixed linear forms; for computational efficiency, we choose $g_{j,1}, \ldots, g_{j,\ell} \in (V \cap \NN^{n+1})$, i.e., taking natural numbers as coordinates with respect to the standard basis $\{x_0, \ldots, x_n\}$. Then,
\[
 \widehat{P}_j(t) := g_{j,1} \cdots g_{j,\ell} S^{t - \ell} V
\]
for $t \ge \ell$. For brevity, we write $G_j = g_{j,1} \cdots g_{j,\ell}.$
Note that a point $q_j \in \widehat{P}_j(t) \cap \widehat{X}(t)$ can be expressed as $q_j := G_j F_j = g_{j,1} \cdots g_{j,\ell} f_{j,\ell+1} \cdots f_{j,t}$. Therefore, we see that adding $\widehat{P}_j(t)$ on both sides of 
\begin{dmath*}
 T_{q_j} \widehat{X}(t) = G_j f_{j,\ell+2}\cdots f_{j,t} V + \cdots + G_j f_{j,\ell+1} \cdots f_{j,t-1} V +
 g_{j,2}\cdots g_{j,\ell} F_j V + \cdots + g_{j,1} \cdots g_{j,\ell-1} F_j V
\end{dmath*} 
results in $T_{q_j} \widehat{X}(t) + \widehat{P}_j(t) = \widehat{P}_j(t) + g_{j,2}\cdots g_{j,\ell} F_j V + \cdots + g_{j,1} \cdots g_{j,\ell-1} F_j V$. This simplifies the approach from \cite[Remark 3.6]{abo} somewhat.

With the foregoing choice of subspaces $\widehat{P}_j(t)$ the statements $\statement{A}_{i,s_b}(t)$ can be verified with \cref{alg_quaternary}. It is a straightforward implementation of the standard approach outlined at the start of this section.

\begin{algorithm}
 \begin{enumerate}
  \item Choose a small prime number such as $P = 8191$.
  \item For $j=1,\ldots,K(t)$, choose $g_{j,1}, \ldots, g_{j,\ell} \in (V \cap \NN^{n+1})$ by randomly sampling the coordinates with respect to the standard basis $x_0, \ldots, x_n$ from the uniform distribution on $\{0,1,\ldots,P-1\}$. Compute $G_j = \prod_{\gamma=1}^\ell g_{j,\gamma}$. The space $\widehat{P}_j(t)$ is then generated by the columns of the matrix 
  \[
   \mx{P}_j = 
   \begin{bmatrix}
    \nu_t( x_0^{t-\ell} G_j ) & \nu_t( x_0^{t-\ell-1}x_1 G_j ) & \cdots & \nu_t( x_{n}^{t-\ell} G_j )
   \end{bmatrix}.
  \]
  Let $\mx{R}_1 = \begin{bmatrix} \mx{P}_1 & \cdots & \mx{P}_{K(t)} \end{bmatrix}$ be their horizontal concatenation.

  \item Select $\eta = \nabla^{K(t)} s_b(t)$ points $p_i = \prod_{\gamma=1}^t \ell_{i,\gamma}$ by randomly taking $\ell_{i,\gamma} \in V$. The coefficients of these $\ell_{i,\gamma}$'s are sampled from the uniform distribution on $\{0,1,\ldots,P-1\}$. As $\{x_0,\ldots,x_n\}$ is a basis of $V$, the tangent space $T_{p_i} \widehat{X}(t)$ is the span of all the columns of all the matrices
  \[
   \mx{G}_{i,\beta} = 
   \begin{bmatrix}
    \nu_t \left( x_0 \prod_{\gamma\ne\beta} \ell_{i,\gamma} \right) & \cdots & \nu_t \left( x_n \prod_{\gamma\ne\beta} \ell_{i,\gamma} \right)
   \end{bmatrix}, 
  \]
for $\beta = 1, \ldots, t$. Let $\mx{R}_2 = \begin{bmatrix}\mx{G}_{1,1} & \cdots & \mx{G}_{1,t} & \cdots & \mx{G}_{\eta,1} & \cdots & \mx{G}_{\eta,t} \end{bmatrix}$ be the horizontal concatenation of all these matrices.
  \item For each $j = 1, \ldots, K(t)$, select $\mu = \nabla^{K(t)-1} s_b(t-\ell)$ points $q_{i,j}$ in $\widehat{X}(t) \cap \widehat{P}_j(t)$ by randomly choosing $f_{i,j,1}, \ldots, f_{i,j,t-\ell} \in (V \cap \NN^{n+1})$ and setting 
  \[
   q_{i,j} = G_j \cdot F_{i,j}, \text{ where } F_{i,j} := \prod_{\gamma=1}^{t-\ell} f_{i,j,\gamma}.
  \]
 The coefficients of the linear forms $f_{i,j,\gamma}$ are sampled uniformly at random from $\{0,1,\ldots,P-1\}$. The tangent space to $\widehat{X}(t)$ modulo $\widehat{P}_j(t)$ at $q_{i,j}$ is given by the span of all the columns of all the matrices 
 \[
  \mx{S}_{i,j,\beta} = 
  \begin{bmatrix} 
   \nu_t \left( x_0 F_{i,j} \prod_{\gamma\ne\beta} g_{j,\gamma} \right) & \cdots & \nu_t \left( x_n F_{i,j}  \prod_{\gamma\ne\beta} g_{j,\gamma} \right)
  \end{bmatrix}
 \]
 for $\beta = 1, \ldots, \ell$. Let $\mx{R}_3$ be the horizontal concatenation of these matrices.
  \item Compute the rank of $\mx{T} = \begin{bmatrix} \mx{R}_1 & \mx{R}_2 & \mx{R}_3 \end{bmatrix}$ over the finite field $\ZZ_P$. If this rank equals $\operatorname{expdim} A_{K,s_b}(t)$, then the statement $\statement{A}_{K,s_b}(t)$ is true. Otherwise, no conclusion can be drawn: the field $\ZZ_P$ might be too small, the forms $g_{j,\gamma}$ or the points $p_i$ or $q_{i,j}$ might be unfortunately chosen, or the statement might be false.
 \end{enumerate}
\caption{Induction on degree for proving the base cases of the Brambilla--Ottaviani lattice presented in \cref{sec_ind_degree_chow} for $(n+1)$-ary decomposable forms.}
\label{alg_quaternary}
\end{algorithm}

\subsection{An algorithm for induction on dimension}
We present the algorithm only for cubics here to unburden the notation. It can be straightforwardly generalized to higher degrees. Nevertheless, it appears that the base cases for quartics might require an induction step length $\ell$ as large as $1536$, potentially necessitating computations in a space of dimension $\binom{4 \ell + 5}{4} = 59\,509\,031\,082\,501$, far beyond the reach of current computing infrastructure.

The base cases of the Brambilla--Ottaviani lattice induction for cubics can be handled similarly to section $5$ of \cite{AV} for $\alpha=0$. For $j=1,\ldots,K(t)$, let
\[
 U_j^\perp(t) := \langle x_{\ell\cdot(j-1)}, \ldots, x_{\ell j - 1} \rangle \subset V_t
\]
be the linear subspace spanned by the coordinates $x_{\ell(j-1) + i-1} \in \kk^{t+1}$ for $i=1,\ldots,\ell$, assuming that the coordinates on $V_t$ are $x_0,\ldots,x_{t}$. As subspaces $U_j(t) \subset V_t$ we select 
\[
U_j(t) = \kk^{t+1} / U_j^\perp(t).
\] 
Then, $\widehat{P}_j(t) = S^d U_j(t)$. Letting $q_j = k_j \ell_j m_j$ with $k_j, \ell_j, m_j \in U_j(t)$, we note that the tangent space to $\widehat{X}(t)$ at $q_{j}$ modulo $\widehat{P}_j(t)$ has a simple expression as in \cite[section 3.2]{AV}. Indeed, adding $\widehat{P}_j(t)$ to both sides of
\[
 T_{q_j} \widehat{X}(t) = (U_j(t) \oplus U_j^\perp(t)) \ell_j m_j + k_j (U_j(t) \oplus U_j^\perp(t)) m_j + k_j \ell_j (U_j(t) \oplus U_j^\perp(t))
\]
yields 
\(
 T_{q_j} \widehat{X}(t) + \widehat{P}_j(t) = \widehat{P}_j(t) + U_j^\perp(t) \ell_{j} m_{j} + k_{j} U_j^\perp(t) m_{j} + k_{j} \ell_{j} U_j^\perp(t).
\)

The algorithm for verifying $\statement{A}_{K,s_b}(t)$ was adapted from the optimized version of \cite[Algorithm 1]{AV}. It is briefly presented here as \cref{alg_cubics} for completeness. Further details can be found in section 5 of \cite{AV}.

\begin{algorithm}
\begin{enumerate}
 \item Choose a small prime number $P$ like $P = 8191$.
 \item Select $\eta := \nabla^{K(t)} s_b(t)$ points $p_i = k_i \ell_i m_i$ of $\widehat{X}(t)$ by randomly taking $k_{i}, \ell_i, m_i \in V_t$. The coefficients of these vectors are sampled uniformly from $0,\ldots,P-1$.
 The tangent space to $\widehat{X}(t)$ at $p_i$ is $V_t \ell_i m_i + k_i V_t m_i + k_i \ell_i V_t$. As $\{x_0,\ldots,x_t\}$ is a basis of $V_t$, $T_{p_i} \widehat{X}(t)$ equals the span of all the columns of the matrices
 \begin{align*}
  \mx{T}_{p_i,1} &= \begin{bmatrix} \nu_3( x_0 \ell_i m_i ) & \cdots & \nu_3( x_t \ell_i m_i) \end{bmatrix}, \\
  \mx{T}_{p_i,2} &= \begin{bmatrix} \nu_3( k_i x_0 m_i ) & \cdots & \nu_3(k_i x_t m_i) \end{bmatrix}, \text{ and } \\
  \mx{T}_{p_i,3} &= \begin{bmatrix} \nu_3( k_i \ell_i x_0 ) & \cdots & \nu_3( k_i \ell_i x_t ) \end{bmatrix}.
 \end{align*}
Let $\mx{R}_1$ denote the horizontal concatenation of this collection of matrices: $\mx{R}_1 := \begin{bmatrix} \mx{T}_{p_1,1} & \mx{T}_{p_1,2} & \mx{T}_{p_1,3} & \cdots & \mx{T}_{p_{\eta},1} & \mx{T}_{p_{\eta},2} & \mx{T}_{p_{\eta},3} \end{bmatrix}$.

\item For each $j=1,\ldots,K(t)$, select $\mu = \nabla^{K(t)-1} s_b(t-\ell)$ points in $\widehat{X}(t) \cap \widehat{P}_j(t)$ by sampling the nonzero coefficients of $k_{i,j}, \ell_{i,j}, m_{i,j} \in U_j(t)$ randomly from $0, \ldots, P-1$ and setting $q_{i,j} = k_{i,j} \ell_{i,j} m_{i,j}$. The tangent space to $\widehat{X}(t)$ at $q_{i,j}$ modulo $\widehat{P}_j(t)$ is generated by the set of all columns of the matrices
\begin{align*}
 \mx{T}_{i,j,1} &= \begin{bmatrix} \nu_3( x_{\ell(j-1)} \ell_{i,j} m_{i,j} ) & \cdots & \nu_3 ( x_{\ell j - 1} \ell_{i,j} m_{i,j} ) \end{bmatrix}, \\
 \mx{T}_{i,j,2} &= \begin{bmatrix} \nu_3( k_{i,j} x_{\ell(j-1)} m_{i,j} ) & \cdots & \nu_3 ( k_{i,j} x_{\ell j - 1} m_{i,j} ) \end{bmatrix}, \text{ and } \\
 \mx{T}_{i,j,3} &= \begin{bmatrix} \nu_3( k_{i,j} \ell_{i,j} x_{\ell(j-1)} ) & \cdots & \nu_3 ( k_{i,j} \ell_{i,j} x_{\ell j - 1} ) \end{bmatrix}.
\end{align*}
Let $\mx{R}_2$ denote the horizontal concatenation of this collection of matrices: $\mx{R}_2 = \begin{bmatrix} \mx{T}_{1,1,1} & \mx{T}_{1,1,2} & \mx{T}_{1,1,3} & \cdots & \mx{T}_{\mu,K(t),1} & \mx{T}_{\mu,K(t),2} & \mx{T}_{\mu,K(t),3} \end{bmatrix}$.
\item Let $Z_j = \{ z_{i_1,\ldots,i_3} \mid i_1, i_2, i_3 \in \{0,\ldots,n\}\setminus\{\ell(j-1),\ldots,\ell j - 1\} \}$, and set 
\[
Y = \{1,\ldots,N(t)\} \setminus \left( \cup_{j=1}^{K(t)} Z_j \right).
\]
\item Set $\mx{T} = \begin{bmatrix} \mx{R}_1 & \mx{R}_2 \end{bmatrix}$ and let $\mx{T}(Y)$ denote the matrix formed by taking only the rows of $\mx{T}$ at the indices in $Y$. Compute $\rank \mx{T}(Y)$ over $\ZZ_P$. If this rank equals
\[
 \operatorname{expdim} A_{K,s_b}(t) - \left| \cup_{j=1}^{K(t)} Z_j \right|,
\]
then $\statement{A}_{K,s_b}(t)$ is true. Otherwise no conclusion can be drawn: the prime $P$ might be too small, the points $p_i$ and $q_{i,j}$ might be unfortunately chosen, or the statement might be false.
\end{enumerate}
\caption{Induction on dimension for proving the base cases of the Brambilla--Ottaviani lattice presented in \cref{sec_ind_dimension_chow} for cubic decomposable forms.}
\label{alg_cubics}
\end{algorithm}

\subsection{Computing the polynomial represented by a rank-1 Chow decomposition}
Contrary to a computer algebra package like Macaulay2 \cite{M2}, C++ has no native support for multiplying multivariate polynomials over a finite field. For this reason, we implemented this operation ourselves for the special case of multiplying $d$ linear forms. A naive algorithm consists of noting that 
\[
 \ell_1 \cdots \ell_d = S^d( \ell_1 \otimes \cdots \otimes \ell_d )
\]
where $\ell_i$ are linear forms in $n+1$ variables and $S^d$ denotes the symmetrization operator in degree $d$. However, this implementation has a complexity of at least $(n+1)^d$ elementary operations due to the tensor product. For the largest case we need to handle ($n=3$ and $d=82$) that requires about $0.2 \cdot 10^{50}$ operations. The combined computing power of the $500$ most powerful supercomputers is approximately $10^{20}$ (floating point) operations per second, so even they would require the estimated age of the universe ($10^{10}$ years) a trillion times over. Hence we decided to search for a more efficient scheme. In the literature several advanced algorithms were proposed for multiplying two general multivariate polynomials; see, e.g., \cite{Moenck1976,CKY1989,Pan1994,BP1994}. Our setting is more specialized because we only need to multiply linear forms. In the end, we implemented a simple algorithm for computing $f \cdot \ell_i$, where $f \in S^k \ZZ_P^{n+1}$ is a polynomial of total degree $k$ in $n+1$ variables over $\ZZ_P$ and $\ell_i$ is the $i$th linear form. The idea is first to compute $f \otimes \ell_i$ instead, i.e., computing the rank-$1$ matrix represented by the coordinate representations of $f$ and $\ell_i$ with respect to the standard monomial bases of $S^k \ZZ_P^{n+1}$ and $S^1 \ZZ_P^{n+1}$ respectively. Then, the entries of this matrix corresponding to the same monomials are summed, resulting in the coordinate array of $f \cdot \ell_i$. The time complexity of our implementation is 
\[
\mathcal{O}\left(  (n+1) \cdot \binom{n+k}{k} \cdot \left( 2 + n \right)  \right).
\]
Using this algorithm, we compute $\ell_1 \cdots \ell_d$ sequentially as $( \cdots ( (\ell_1 \cdot \ell_2) \cdot \ell_3 ) \cdots ) \cdot \ell_d$. The implementation was quite efficient and outperformed Macaulay2's general polynomial multiplication routine \texttt{product} by a wide margin: multiplying $82$ random linear forms in $\ZZ_{8191}^4$ took $33.6$ seconds, averaged over $10$ runs, in Macaulay2 on a computer with $8$GB main memory and an Intel Core i7-5600U processor, while our C++ implementation took only $1.1$ seconds.

\subsection{Implementation details} 
\Cref{alg_cubics,alg_quaternary} were implemented in C$++$ and compiled with the GNU Compiler Collection with $-O3$ enabled.\footnote{Our implementation can be found along with the arXiv submission of this paper.} We used the Eigen v3 library \cite{Eigen} for the basic matrix type and the \texttt{Rank} function of FFLAS-FFPACK \cite{FFLAS} to compute the rank of matrices over finite fields. This last library requires a BLAS implementation; we used OpenBLAS \cite{OpenBLAS}. The code was executed on a server with $128$GB of main memory and two Intel Xeon E5-2697 v3 processors (a total of 28 physical cores with a clock speed of 2.6GHz). Some portions of the code were parallelized via OpenMP v3.1 directives.

\subsection{The results}

The main use of \cref{alg_cubics,alg_quaternary} is to complete the proof of \cref{main_result}.

\begin{proof}[Proof of \cref{main_result}]
Let $\ell = 27$, $K_0 = 3$, and $t_0 = 82$. Recall the $\ell$-quasiquadratic polynomials $s_1$ and $s_2$ from \cref{eqn_the_esses}. We deal with the two cases separately.

\vspace{0.5em}
\paragraph{\textit{Case 1: Quaternary forms.}} Take the Brambilla--Ottaviani lattice from \cref{sec_ind_degree_chow}. It follows from \cref{cor_nondefective} that it suffices to prove $\statement{A}_{K,s_b}(t)$ for all $2 \le t \le 82$ and $b=1,2$. Note that $\statement{A}_{K,s_b}(1)$ is always true. To this end, we applied \cref{alg_quaternary} to each of these cases. The algorithm, using $P=8191$, verified that each of these statements is true.

Our implementation of the algorithm produces a certificate that consists of the explicit coordinates of the linear forms $g_{j,\gamma}$, $\ell_{i,\gamma}$, and $f_{i,j,\gamma}$. For this choice of forms, reported in the certificate, the corresponding linear subspace $A_{K,s_b}(t)$ has the expected dimension. All certificates can be found at \url{https://nextcloud.cs.kuleuven.be/s/PdtR6ZPRfmmYHRa} and the authors' web pages. They constitute the computer-assisted proof of this case. An example certificate for the true subabundant statement $\statement{A}_{K,s_1}(5)$ looks like this:
\begin{verbatim}
Using random seed: 1452337571
Need a 56 x 60 matrix.
l_{0,0} = [7354 6394  862 7318]
l_{0,1} = [6008 7131 6458 3996]
l_{0,2} = [ 956 1407 7361  119]
l_{0,3} = [1659 1730 3153 6358]
l_{0,4} = [1861 3230 4474 6784]
l_{1,0} = [2581 5927 3361 5265]
l_{1,1} = [6076 3508  373 2488]
l_{1,2} = [4744 1652 3436  940]
l_{1,3} = [  65 1209 4285 6640]
l_{1,4} = [7483 5618 2000 4187]
l_{2,0} = [4138 6897 4991 5908]
l_{2,1} = [7470 2404 1374 7439]
l_{2,2} = [2454 6397 6616 4915]
l_{2,3} = [3309 7016 1544 7528]
l_{2,4} = [2433  571 1439  458]
Constructed T in 0.001s.
Computed the rank of the 56 x 60 matrix T over F_8191 in 0.001s.
Found 48 vs. 48 expected.
T_0(3, 5, 27) is TRUE (SUBABUNDANT)
\end{verbatim}
The final statement of each certificate is of the form $T_i(n, t, \ell)$, where $i = K(t)$, $n = 3$ and $\ell = 27$.
 
Producing these certificates was computationally very demanding. The most challenging case, $\statement{A}_{2,s_1}(81)$, requires first the construction of the $95,284 \times 112,844$ matrix $\mx{T}$ and then the computation of its rank over $\ZZ_{P}$. The first phase took nearly $5$ hours, while the rank computation completed in about $3$ hours and $21$ minutes. The total time to prove all the base cases was $4$ days, $21$ hours and $34$ minutes.

\vspace{0.5em}
\paragraph{\textit{Case 2: Cubics}} We take the Brambilla--Ottaviani lattice from \cref{sec_ind_dimension_chow}. By \cref{cor_nondefective} we should only prove the base cases for $1 \le t \le 82$ and $b=1,2$. Applying \cref{alg_cubics}, we produced certificates showing that these base cases are all true. These certificates contain the coordinates of the linear forms $k_i$, $\ell_i$, and $m_i$ for which the corresponding $A_{K,s_b}(t)$ has the expected dimension. The computations were again performed in $\ZZ_{8191}$.

Proving all base cases for cubics by \cref{alg_cubics} was significantly faster than the case of quaternary forms. The main reason is that the construction of the matrix $\mx{T}$ required much less computational effort. All matrices were constructed in less than $2$ minutes. The total time to construct the proofs of the base cases was only about $11$ hours and $36$ minutes.
\end{proof}


\end{document}